\let\pa\partial
\let\var\varepsilon
\newcommand{\R}{{\mathbb R}}
\newcommand{\T}{{\mathbb T}}
\newcommand{\indicator}{\mathbb{I}}
\newtheorem{theorem}{Theorem}
\newtheorem{lemma}[theorem]{Lemma}
\newtheorem{proposition}[theorem]{Proposition}
\newtheorem{remark}[theorem]{Remark}
\newtheorem{definition}{Definition}
\newcommand{\floor}[1]{\lfloor #1 \rfloor}
\newcommand{\fullV}[1]{\underline{\underline{#1}}} 
\newcommand{\projV}[1]{\underline{#1}} 
\newcommand{\probP}{\mathcal{P}} 
\newcommand{\projP}{\mathbb{P}} 
\newcommand{\dd}{\mathrm{d}} 
\newcommand{\entropyH}{{\mathcal H}} 
\begin{document}

\title[Macroscopic limit]{About the entropic structure of detailed balanced multi-species cross-diffusion equations}

\author[E. S. Daus]{Esther S. Daus}
\address{Institute for Analysis and Scientific Computing, Vienna University of
	Technology, Wiedner Hauptstra\ss e 8--10, 1040 Wien, Austria}
\email{esther.daus@tuwien.ac.at}

\author[L. Desvillettes]{Laurent Desvillettes}
\address{Universit\'e Paris Diderot, Sorbonne Universit\'e, CNRS, Institut de Math\'ematiques de Jussieu-Paris Rive Gauche, IMJ-PRG, F-75013, Paris, France.}
\email{desvillettes@math.univ-paris-diderot.fr}

\author[H. Dietert]{Helge Dietert}
\address{Universit\'e Paris Diderot, Sorbonne Universit\'e, CNRS, Institut de Math\'ematiques de Jussieu-Paris Rive Gauche, IMJ-PRG, F-75013, Paris, France.}
\email{dietert@math.univ-paris-diderot.fr}

\date{\today}

\thanks{The first author acknowledges partial support from the
  Austrian Science Fund (FWF), grants P27352 and P30000. The
  research leading to this paper was also partially funded by the
  French ``ANR blanche'' project Kibord: ANR-13-BS01-0004, by the GDRI
  of the CNRS ReaDiNet, Reaction-Diffusion Network in Biomedecine, and
  by Universit\'e Sorbonne Paris Cit\'e, in the framework of the
  ``Investissements d'Avenir'', convention ANR-11-IDEX-0005. The last
  author acknowledges partial support from the People Programme (Marie
  Curie Actions) of the European Union’s Seventh Framework Programme
  (FP7/2007-2013) under REA grant agreement n.
  PCOFUND-GA-2013-609102, through the PRESTIGE programme coordinated
  by Campus France.}

\begin{abstract}
  This paper links at the formal level the entropy structure of a
  multi-species cross-diffusion system of Shigesada-Kawasaki-Teramoto
  (SKT) type (cf. \cite{SKT79}) satisfying the detailed balance
  condition with the entropy structure of a reversible microscopic
  many-particle Markov process on a discretised space. The link is
  established by first performing a mean-field limit to a master
  equation over discretised space. Then the spatial discretisation
  limit is performed in a completely rigorous way. This by itself
  provides a novel strategy for proving global existence of weak
  solutions to a class of cross-diffusion systems.
\end{abstract}

\keywords{Population dynamics, Shigesada-Kawasaki-Teramoto system,
  mean-field limit, detailed balance, entropy method, Onsager's
  principle.}

\subjclass[2000]{35K55, 35K57, 35Q92, 60J28, 82C22, 92D25}

\maketitle

\section{Introduction}
\label{sec:introduction}

We consider the population dynamics cross-diffusion system model
coming out of the classical paper by Shigesada, Kawasaki and Teramoto
\cite{SKT79} (SKT model) for $n\geq 2$ species without reaction
term. For clarity, we suppose that the species live on the torus
$\T = [0,1)$ with periodic boundary conditions. Thus, the density
$u_i := u_i(t,x)$ of species $i=1,\dots ,n$ evolves as
\begin{equation}
  \label{eq:intro-skt}
  \partial_t u_i = \Delta \left(D_i u_i + \sum_{j=1}^{n} A_{ij} u_j u_i \right)
\end{equation}
with diffusion constants $D_i \geq 0$, self-diffusion coefficients $A_{ii}>0$ and cross-diffusion coefficients $A_{ij}\geq 0$ for $i,j=1,\dots,n$.

For this system \eqref{eq:intro-skt}, Chen, Daus and J\"{u}ngel showed in \cite{CDJ18} that
\begin{equation}
  \label{eq:entropy-skt}
  \entropyH (u) := \int_{\T} \sum_{i=1}^{n}
  \pi_i \left[
    u_i \log (u_i(x)) - u_i(x) + 1
  \right] \dd x
\end{equation}
with positive constants $\pi_i>0$ for $i=1,\dots ,n$ is an entropy (Lyapunov)
functional if the following condition holds
\begin{equation}
  \label{eq:detailed-balance-skt}
  \pi_i A_{ij} = \pi_j A_{ji}
  \qquad \text{for $i,j=1,\dots,n$,}
\end{equation}
which for $n \geq 3$ gives a constraint on the cross-diffusion
coefficients $A_{ij}$.  Under this condition (they called it
\textit{detailed balance condition}), the authors were then able to
construct global weak solutions to \eqref{eq:intro-skt} for an
arbitrary number of population species with the help of the gradient
estimates coming from the entropy production of the entropy
\eqref{eq:entropy-skt}.

The motivation of this work is to understand the origin of the entropy
\eqref{eq:entropy-skt} under the condition
\eqref{eq:detailed-balance-skt}. In particular, we wanted to link the
condition \eqref{eq:detailed-balance-skt} to the detailed balance
equation of finite-state Markov chains, where the detailed balance
equation has been identified as necessary and sufficient condition for
the existence of a gradient flow structure with respect to the
relative entropy
\cite{dietert-2015-characterisation,maas-2011-gradient-flows,mielke-2011-gradient}.

In this work, we establish the formal link between the entropy
structure of \eqref{eq:intro-skt} and the entropy structure of a
microscopic many-particle Markov process on a discrete space. The link
is established in two steps. In the first step, we perform a formal
mean-field limit keeping the spatial discretisation fixed. The
resulting system is a quadratic master equation. In the second step,
we then refine the spatial discretisation and arrive at the cross-diffusion system
\eqref{eq:intro-skt}.

In this many-particle derivation, the condition
\eqref{eq:detailed-balance-skt} enters as a natural necessary condition
for the construction of a reversible Markov process and the
constants $\pi_i$ can be interpreted as relative portions in the
many-particle model.

In both limits, the entropy structure is preserved and, in particular,
the master equation on the discretised space has the corresponding
entropy structure. This allows us to perform the spatial
discretisation limit in a rigorous way, which is an interesting result
by itself (which will be discussed after the statement of our main
Theorem \ref{main} in \cref{sec:rigorous.derivation}).

Note that the transfer of the entropy structure from a microscopic model towards a mesoscopic model has been extensively studied for equations belonging to other classes. The spatially homogeneous Boltzmann equation is for example a model in which many results have been proven (cf. \cite{MM13}). It shares some features with the SKT model (quadraticity of course, but also diffusive properties when the angular cutoff of Grad is not performed).

For the second rigorous limit from the space discretised master
equation to the SKT cross-diffusion system, similar discrete in space
approximation schemes for the SKT model were studied in \cite{GLS09,
  ABR11, Mur17}, but they are not necessarily entropy preserving. Very
recently, an entropy preserving numerical scheme was proposed in
\cite{CCGJ18}, though not for the SKT model, but for a volume-filling
type cross-diffusion system.

Other approaches have been proposed for obtaining cross-diffusion
equations of SKT type out of microscopic models. First (stochastic) approaches from particle models to reaction-diffusion systems trace back to Oelschl\"{a}ger \cite{Oel89} in the late 1980s. Recently, Fontbona and M\'el\'eard \cite{FM15} managed to prove the convergence from realistic individual-based models in a suitable
limit towards non-local (convoluted w.r.t. space) SKT-type
systems. Note that because of the lacking evidence of existence and
uniqueness of strong solutions to the limiting model, it looks
difficult to provide a rigorous proof of passage to the limit towards
the full (i.e. nontriangular) local multi-species SKT system when one
starts with a microscopic model (whether on a discrete set of
positions or on a continuous set of positions, using a nonlocality
which disappears in the limit). Note, however, that very recently, Moussa [21] manged to prove the convergence in the
  case of strictly triangular limiting local SKT model with bounded
  coefficients starting on a continuous set of positions with a
  nonlocality which disappears in the limit by using duality
  techniques (introduced for instance by Pierre and Schmitt in \cite{PS97}).

\section{Formal mean-field limit}
For the microscopic derivation, we first consider a many-particle
system on a fixed spatial discretisation. The spatial discretisation
consists of $M$ positions given by
\begin{equation}
  \label{eq:def-omega-m}
  \Omega_M =\{x_k : k=0,\dots,M-1\}
  \qquad\text{with}\qquad
  x_k = \frac{k}{M} = kh,
\end{equation}
which is understood in the periodic setting, and where  we set $h=M^{-1}$.

Given the relative fractions $\pi_1,\dots,\pi_n$ with
  $\pi_i >0$ between the species, we consider the
many-particle system with $\floor{\pi_i N}$ particles
of species $i=1,\dots,n$, where $\floor{\pi_i N}$
denotes the largest integer smaller than $\pi_i N$. The aim of this
section is to obtain a suitable master equation when $N\to\infty$.

The microscopic configuration is given by
\begin{equation*}
  \fullV{x} := (x_1^1,\dots,x_1^{\floor{\pi_1 N}},x_2^1,\dots,x_2^{\floor{\pi_2
    N}},\dots \dots,x_n^1,\dots,x_n^{\floor{\pi_n N}})
  \in \Omega_M^{\otimes \left(\floor{\pi_1N}+\dots+\floor{\pi_nN}\right)} =: \Omega_M^N
\end{equation*}
and this configuration is set to evolve in time as a time-continuous
Markov chain.

The distribution over the microscopic configurations at time $t$ is
given by a density $\mu^N_t \in \probP(\Omega^N_M)$. In terms of
statistical physics, this means that we consider an ensemble over the
microscopic configurations.

We assume that the particles within a species are
indistinguishable. The class of such measures is denoted by
$\probP_s(\Omega_M^N)$ and defined as follows:

\begin{definition}[Indistinguishability]
  A measure $\mu \in \probP(\Omega^N_M)$ is in $\probP_s(\Omega^N_M)$
  if and only if for all permutations $\sigma_1$,\dots,$\sigma_n$ of
  resp. $\{1,..,\floor{\pi_1N}\}$, \dots, $\{1,..,\floor{\pi_nN}\}$ and configurations
  $\fullV{x} \in \Omega_M^N$ it holds that
  \begin{equation*}
    \begin{aligned}
      &\mu^N(x_1^{\sigma_1(1)},\dots,x_1^{\sigma_1(\floor{\pi_1 N})},
      x_2^{\sigma_2(1)},\dots,x_2^{\sigma_2(\floor{\pi_2 N})},
      \dots \dots, x_n^{\sigma_n(1)},\dots,x_n^{\sigma_n(\floor{\pi_n N})}) \\
      &= \mu^N(x_1^1,\dots,x_1^{\floor{\pi_1 N}},x_2^1,\dots,x_2^{\floor{\pi_2
        N}},\dots \dots,x_n^1,\dots,x_n^{\floor{\pi_n N}}).
    \end{aligned}
  \end{equation*}
\end{definition}

By the indistinguishability, the distribution of a typical particle is
given by the marginal distribution. For this, we first introduce the
following notation for projections.

\begin{definition}[Projections]
  Let $p=(p_1,\dots,p_n)$ and $N$ be such that
  $p_i \le \floor{\pi_i N}$ for $i=1,\dots,n$.  We define the
  projection
  \begin{equation*}
    \projP^{N;(p)} : \probP(\Omega_M^N)
    \mapsto \probP(\Omega_M^{\otimes (p_1+\dots +p_n)})
  \end{equation*}
  by
  \begin{equation*}
    \begin{aligned}
      (\projP^{N;(p)} \mu^N)(\projV{x})
      := \sum_{x_1^{p_1+1}\in \Omega_M} \dots
      \sum_{x_{1}^{\floor{\pi_1N}}\in\Omega_M}
      &\sum_{x_2^{p_2+1}\in \Omega_M} \dots
      \sum_{x_{2}^{\floor{\pi_2N}}\in\Omega_M}\dotsi \dots
      \sum_{x_{n}^{p_n + 1}\in\Omega_M}
      \dots \sum_{x_{n}^{\floor{\pi_nN}}\in\Omega_M} \\
      &\mu^{N}(x_1^1,\dots,x_1^{\floor{\pi_1 N}},x_2^1,\dots,x_2^{\floor{\pi_2
        N}},\dots\dots,x_n^1,\dots,x_n^{\floor{\pi_n N}})
    \end{aligned}
  \end{equation*}
  for
  \begin{equation*}
    \projV{x} := (x_1^1,\dots,x_1^{p_1},x_2^1,\dots,x_2^{p_2},\dots \dots,x_n^1,\dots, x_n^{p_n}).
  \end{equation*}
  For $\mu^N \in \probP(\Omega_M^N)$, we denote the marginal by
  \begin{equation*}
    \mu^{N;(p)} = \projP^{N;(p)} \mu^{N} \in \probP(\Omega_M^{\otimes (p_1+\dots+p_n)}).
  \end{equation*}
\end{definition}

We then expect to recover the master equation from the first marginals
\begin{equation*}
  u_i := \mu^{N;(e_i)},
\end{equation*}
where $e_i = (0,\dots,0,1,0,\dots,0)$ denotes the unit vector with $n$
components, where the $1$ is at the $i$-th component.

The quadratic terms are expected to originate from a binary
interaction in the particle model. The diffusion is the result of a
random walk and the nonlinearity given in the SKT model
\eqref{eq:intro-skt} is expected to come out from jumps of
particles interacting at the same position.

The entropy structure is expected to be linked to the reversibility of
the Markov chain. We therefore introduce a binary interaction which
happens in a reversible way. This can be realised by imposing the same
jump for the interacting particles.

This leads us to consider the following class of particle models,
describing the evolution of the microscopic configuration.

\begin{definition}[Reversible particle model]
  \label{def:markov-chain-particle-model}
  Let $D_i$ and $D_{ij}$ be nonnegative constants such that $D_{ij} = D_{ji}$ for
  $i,j=1,\dots,n$. For a fixed $N$, define the time-continuous Markov
  chain on $\Omega_M^N$ by the transitions
  \begin{align*}
    &\left.
      \begin{aligned}
        \fullV{x} \to \fullV{x} + \fullV{e}_i^a + \fullV{e}_j^b \\
        \fullV{x} \to \fullV{x} - \fullV{e}_i^a - \fullV{e}_j^b
      \end{aligned}
    \right\}
    &\text{ with rate }&
                         \delta_{(i,a)\not=(j,b)}
                         \delta_{x_i^a=x_j^b}
                         \frac{D_{ij}}{N} \\
    &\left.
      \begin{aligned}
        \fullV{x} \to \fullV{x} + \fullV{e}_i^a \\
        \fullV{x} \to \fullV{x} - \fullV{e}_i^a
      \end{aligned}
    \right\}
    &\text{ with rate }&
                         D_i
  \end{align*}
  for $i,j=1,\dots,n$ and $a=1,\dots,\floor{\pi_iN}$, $b=1,\dots,\floor{\pi_jN}$,
  where $\fullV{e}_i^a$ is the vector with components of value zero at all places, except for the $a$th particle of species
  $i$, where the value is $h = 1/M$. The Markov chain is defined to
  have no other transitions.
\end{definition}

\begin{remark}
  The transition rates are well-defined if and only if
  $D_{ij} = D_{ji}$, which will lead to condition
  \eqref{eq:detailed-balance-skt}.
\end{remark}

From the construction, we directly see the reversibility.
\begin{lemma}\label{lemma.2}
  The Markov chain given in \cref{def:markov-chain-particle-model} is
  reversible and the stationary distribution is the homogeneous
  distribution, where each $\fullV{x} \in \Omega^{N}_M$ has the same
  probability $|\Omega^N_M|^{-1} = M^{-(\floor{\pi_1 N} + \ldots+ \floor{\pi_n N})}$.
\end{lemma}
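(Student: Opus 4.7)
The plan is to verify the detailed balance equation $\pi(\fullV{x})\, q(\fullV{x},\fullV{y}) = \pi(\fullV{y})\, q(\fullV{y},\fullV{x})$ for the uniform distribution $\pi \equiv |\Omega_M^N|^{-1}$, which reduces (since $\pi$ is constant) to showing that the jump rates are symmetric: $q(\fullV{x},\fullV{y}) = q(\fullV{y},\fullV{x})$ for every pair of configurations. Since the only transitions are the four listed types in \cref{def:markov-chain-particle-model}, it suffices to check that each admissible transition and its reverse carry the same rate.

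First I would handle the unary moves. The transition $\fullV{x} \to \fullV{x} + \fullV{e}_i^a$ has rate $D_i$, and its reverse is the move $\fullV{y} \to \fullV{y} - \fullV{e}_i^a$ starting from $\fullV{y} := \fullV{x} + \fullV{e}_i^a$, which also has rate $D_i$ by definition. The minus-type unary move is handled identically. Next I would treat the binary moves. The transition $\fullV{x} \to \fullV{x} + \fullV{e}_i^a + \fullV{e}_j^b$ carries rate $\frac{D_{ij}}{N}\,\delta_{(i,a)\neq(j,b)}\delta_{x_i^a = x_j^b}$. Setting $\fullV{y} = \fullV{x} + \fullV{e}_i^a + \fullV{e}_j^b$, the reverse $\fullV{y} \to \fullV{y} - \fullV{e}_i^a - \fullV{e}_j^b$ has rate $\frac{D_{ji}}{N}\,\delta_{(j,b)\neq(i,a)}\delta_{y_i^a = y_j^b}$. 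Because $y_i^a = x_i^a + h$ and $y_j^b = x_j^b + h$ (read modulo $1$ on the torus), the colocation condition $y_i^a = y_j^b$ is equivalent to $x_i^a = x_j^b$, and by hypothesis $D_{ij} = D_{ji}$. Hence the two rates agree. The pair with a sign flip is symmetric. Since the only configurations reachable in one step from $\fullV{x}$ are those of the above forms, all other pairs $(\fullV{x}, \fullV{y})$ have $q(\fullV{x},\fullV{y}) = q(\fullV{y},\fullV{x}) = 0$ trivially.

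Having $q(\fullV{x},\fullV{y}) = q(\fullV{y},\fullV{x})$ for all $\fullV{x},\fullV{y}$, detailed balance against the uniform measure holds. Summing over $\fullV{y}$ yields that the uniform measure is stationary, and detailed balance itself is exactly the statement of reversibility of the Markov chain.

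The only mild obstacle is the bookkeeping on the torus: one has to confirm that the colocation indicator is preserved under the forward/reverse pairing, which works precisely because both interacting particles perform the \emph{same} shift $+h$ (or $-h$), so their relative position is unchanged; this is the key feature of the construction that the authors emphasize in the discussion preceding \cref{def:markov-chain-particle-model}. The symmetry $D_{ij}=D_{ji}$ is the other, and only algebraic, ingredient.
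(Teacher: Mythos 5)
Your proof is correct: checking detailed balance against the uniform measure by pairing each transition with its reverse, using that the colocation indicator is invariant under the simultaneous shift and that $D_{ij}=D_{ji}$, is exactly the ``direct computation'' the paper invokes without writing out. You have simply supplied the details the authors omitted, so the approach is the same.
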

\begin{proof}
  This can be obtained by a direct computation.
\end{proof}

We now suppose that the microscopic configuration evolves according to
the Markov chain. Then the distribution $\mu^N$ solves the following linear ODE:
\begin{equation}
  \label{eq:evolution-density-markov-chain}
  \begin{aligned}
    \frac{\dd}{\dd t} \mu^N(\fullV{x})
    &= \sum_{i=1}^n\sum_{a=1}^{\floor{\pi_iN}}
    D_i
    \Big[\mu^N(\fullV{x}+\fullV{e_i^a})+\mu^N(\fullV{x}-\fullV{e_i^a})-2\mu^N(\fullV{x})\Big] \\
    &+ \frac{1}{2}
    \sum_{i=1}^n \sum_{a=1}^{\floor{\pi_iN}}
    \sum_{j=1}^n \sum_{b=1}^{\floor{\pi_jN}}
    \delta_{(i,a)\not=(j,b)}
    \delta_{x_i^a=x_j^b}
    \frac{D_{ij}}{N}
    \Big[\mu^N(\fullV{x}+\fullV{e_i^a}+ \fullV{e_j^b})+\mu^N(\fullV{x}- \fullV{e_i^a}-\fullV{e_j^b})-2\mu^N(\fullV{x})\Big].
  \end{aligned}
\end{equation}
Here we used that $x_i^a=x_j^b$ holds after the pairwise interaction
if and only if it holds before, so that we can factor it out
(that is, $\delta_{x_i^a=x_j^b} =
\delta_{(\fullV{x}+\fullV{e_i^a})_i^a=(\fullV{x}+\fullV{e_j^b})_j^b} =
\delta_{(\fullV{x}-\fullV{e_i^a})_i^a=(\fullV{x}-\fullV{e_j^b})_j^b}$).

We further suppose that the particles are indistinguishable, which is
propagated in time.
\begin{lemma}[Propagation of indistinguishability]
  Suppose that $\mu^N$ is the distribution for the Markov chain given
  in \cref{def:markov-chain-particle-model}. If $\mu^N \in
  \probP_s(\Omega_M^N)$ initially holds, then it also holds at all later times.
\end{lemma}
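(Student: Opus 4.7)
The strategy is to show that for any permutation tuple $\sigma=(\sigma_1,\dots,\sigma_n)$ (where $\sigma_i$ permutes $\{1,\dots,\floor{\pi_i N}\}$), the pushforward $\nu^N_t(\fullV{x}) := \mu^N_t(\sigma\fullV{x})$, with $(\sigma\fullV{x})_i^a = x_i^{\sigma_i(a)}$, satisfies the exact same linear ODE \eqref{eq:evolution-density-markov-chain} as $\mu^N_t$. Since by assumption $\nu^N_0 = \mu^N_0$, uniqueness of the solution to the finite-dimensional linear ODE then forces $\nu^N_t = \mu^N_t$ for all $t\geq 0$, which is exactly the indistinguishability at later times.

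The key observation driving this is that the generator of the Markov chain is invariant under relabeling within each species. Concretely, since $\sigma$ acts as a bijection on labels, one checks directly that $\sigma\fullV{x} \pm \fullV{e}_i^a = \sigma(\fullV{x} \pm \fullV{e}_i^{\sigma_i(a)})$, that $\delta_{(\sigma\fullV{x})_i^a = (\sigma\fullV{x})_j^b} = \delta_{x_i^{\sigma_i(a)} = x_j^{\sigma_j(b)}}$, and that $\delta_{(i,a)\neq(j,b)} = \delta_{(i,\sigma_i(a))\neq(j,\sigma_j(b))}$ (using injectivity of $\sigma_i$ within each species). Plugging these identities into the right-hand side of \eqref{eq:evolution-density-markov-chain} evaluated at $\sigma\fullV{x}$, and then performing the bijective changes of summation variables $a'=\sigma_i(a)$, $b'=\sigma_j(b)$, recovers exactly the right-hand side for $\nu^N_t(\fullV{x})$. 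Hence $\frac{\dd}{\dd t}\nu^N_t$ equals the generator applied to $\nu^N_t$, as desired.

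There is no real obstacle here; the only thing to be careful about is bookkeeping for the quadratic term, where one must verify that the two indicator factors $\delta_{(i,a)\neq(j,b)}$ and $\delta_{x_i^a = x_j^b}$ really do transform covariantly under $\sigma$. Once those identities are in place, the conclusion follows immediately from uniqueness for a system of linear ODEs with constant coefficients on the finite-dimensional space $\mathbb{R}^{\Omega_M^N}$.
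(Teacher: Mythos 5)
Your proof is correct and is essentially the detailed version of the paper's argument: the paper disposes of this lemma in one line ("it follows directly from the definition of the transition rates, which respect the indistinguishability"), and your verification that the generator commutes with the relabeling action, combined with uniqueness for the finite-dimensional linear ODE, is precisely what that sentence is asserting.
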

\begin{proof}
  It follows directly from the definition of the transition rates, which
  respect the indistinguishability.
\end{proof}

 We now write explicitly the formula emphasizing the entropy structure of our reversible Markov process, we recall that due to \cite{maas-2011-gradient-flows}, the time-reversible many-particle continuous time Markov chain is a gradient flow of the relative entropy with respect to its stationary distribution.

 \begin{lemma}\label{lemma.tilde.H} We assume that $D_i\ge 0$, and
   $D_{ij}=D_{ji} \ge 0$ for all $1\leq i,j\leq n$. We also assume that $\mu^N$ is initially strictly positive.
   Then, the entropy functional defined by
   \begin{align}\label{tilde.H}
     \tilde{\mathcal{H}}(\mu^N) : =\sum_{\fullV{x}}\mu^N(\fullV{x})\log\left(\frac{\mu^N(\fullV{x})}{M^{(\floor{\pi_1 N}+ \cdots + \floor{\pi_n N})}}\right)
   \end{align}
   is decreasing with respect to time, i.e.
   \begin{align*}
     \frac{\dd}{\dd t}\tilde{\mathcal{H}}(\mu^N) \leq 0 \quad
     \text{for all $t>0$}
   \end{align*}
   along the flow of \eqref{eq:evolution-density-markov-chain}.
 \end{lemma}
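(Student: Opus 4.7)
The plan is to compute $\frac{\dd}{\dd t}\tilde{\mathcal{H}}(\mu^N)$ directly from \eqref{eq:evolution-density-markov-chain} and then to exploit the symmetry of the transition rates (i.e.\ the reversibility established in \cref{lemma.2}) to rewrite the result in a manifestly non-positive form. The underlying principle is the classical dissipation identity for the relative entropy of a reversible continuous-time Markov chain with respect to its stationary measure, which here is simply the uniform measure on $\Omega_M^N$. Note that positivity of $\mu^N$ is propagated by the linear ODE \eqref{eq:evolution-density-markov-chain}, so that all logarithms below are well-defined.

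First I would differentiate, obtaining
\begin{equation*}
\frac{\dd}{\dd t}\tilde{\mathcal{H}}(\mu^N)
= \sum_{\fullV{x}}\frac{\dd \mu^N(\fullV{x})}{\dd t}\left[\log\!\left(\frac{\mu^N(\fullV{x})}{M^{\floor{\pi_1 N}+\cdots+\floor{\pi_n N}}}\right)+1\right],
\end{equation*}
and drop the constant $+1$ because $\sum_{\fullV{x}} \frac{\dd \mu^N}{\dd t}(\fullV{x}) = 0$ by conservation of total probability.

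Next I would substitute \eqref{eq:evolution-density-markov-chain} and treat the single-particle jumps and the binary interactions separately. For each type of transition I would perform the standard change of summation variable (shifting $\fullV{x}$ by $\pm\fullV{e}_i^a$, respectively $\pm(\fullV{e}_i^a+\fullV{e}_j^b)$) so that each state is paired with the neighbour it can jump to. After pairing, the single-particle contribution would take the form
\begin{equation*}
-\tfrac{1}{2}\sum_{\fullV{x}}\sum_{i=1}^{n}\sum_{a=1}^{\floor{\pi_i N}} D_i \bigl[\mu^N(\fullV{x}+\fullV{e}_i^a)-\mu^N(\fullV{x})\bigr]\bigl[\log \mu^N(\fullV{x}+\fullV{e}_i^a)-\log \mu^N(\fullV{x})\bigr],
\end{equation*}
with an entirely analogous expression for the pair interactions carrying the weight $\delta_{(i,a)\neq(j,b)}\delta_{x_i^a=x_j^b}\,D_{ij}/N$. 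Both sums are non-positive by the elementary inequality $(a-b)(\log a-\log b)\geq 0$, which concludes the proof.

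The only non-trivial bookkeeping is in the binary term: one needs the indicator $\delta_{x_i^a=x_j^b}$ in $\fullV{x}$ to match the corresponding indicator in the shifted configuration so that the pairing is clean. This is precisely the observation recorded immediately after \eqref{eq:evolution-density-markov-chain}: the indicator is invariant under the simultaneous shifts of the $(i,a)$ and $(j,b)$ coordinates by $\pm h$. Combined with the symmetry $D_{ij}=D_{ji}$, this allows the sum to be reorganised symmetrically in $(\fullV{x},\fullV{y})$ and collapsed into the dissipation form displayed above. I expect this reindexing to be the only mildly delicate step; everything else is routine.
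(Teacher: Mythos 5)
Your proposal is correct and follows essentially the same route as the paper: differentiate $\tilde{\mathcal H}$, insert the master equation, reindex the sums over the periodic lattice (the paper phrases this as a discrete integration by parts), and conclude from the monotonicity of the logarithm, including the same observation that the indicator $\delta_{x_i^a=x_j^b}$ is invariant under the simultaneous shifts. The only quibble is the prefactor $\tfrac12$ in your single-particle dissipation term: as displayed, with the sum running only over the forward shifts $+\fullV{e}_i^a$, the correct constant is $1$, but this is immaterial for the sign and hence for the conclusion.
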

\begin{proof}
  Note first that the strict positivity of $\mu^N$ is maintained in
  the evolution of the process, so that the logarithm of $\mu^N$ is
  always well defined.

  The proof works in a totally analogous way as the proof of
  \eqref{estim.2}, where the entropy decay is shown on the macroscopic
  level. For completeness, we sketch the proof also here, by using the
  following notation for any function $f: \Omega^N_M \to (0,\infty)$:
  \begin{align*}
    \Delta_{\fullV{e}_i^a}(f(\fullV{x}))&:= f(\fullV{x} + \fullV{e}_i^a) + f(\fullV{x} - \fullV{e}_i^a) - 2f(\fullV{x}), \\
    \Delta_{(\fullV{e}_i^a + \fullV{e}_j^b)}(f(\fullV{x}))&:= f(\fullV{x} + \fullV{e}_i^a + \fullV{e}_j^b) + f(\fullV{x} - \fullV{e}_i^a - \fullV{e}_j^b) - 2f(\fullV{x}), \\
    \nabla^+_{(\fullV{e}_i^a)}(f(\fullV{x}))&:= f(\fullV{x} + \fullV{e}_i^a) - f(\fullV{x}), \\
    \nabla^+_{(\fullV{e}_i^a + \fullV{e}_j^b)}(f(\fullV{x}))&:= f(\fullV{x} + \fullV{e}_i^a + \fullV{e}_j^b)- f(\fullV{x}).
  \end{align*}
  Here $\fullV{e}_i^a$ is the vector with components of value zero at
  all places, except for the $a$th particle of species $i$,
  where the value is $h = 1/M$.  This coincides with the
  notation of a discrete Laplacian and discrete gradient up to
  positive scaling constants. We will introduce them more rigorously
  on the level of the master equation in Section
  \ref{sec:rigorous.derivation}.  Thanks to the periodicity of
  the domain and to a discrete integration by parts (see detailed
  formulas at the beginning of Section \ref{sec:rigorous.derivation}),
  it holds that
  \begin{align*}
    \frac{\dd}{\dd t}\tilde{\mathcal{H}}(\mu^N) &= \sum_{\fullV{x}} \left(\log \mu^N(\fullV{x}) +1 \right)\frac{d}{dt}\left(\mu^N(\fullV{x})\right) \\
                                                & = \sum_{\fullV{x}}\sum_{i=1}^n \sum_{a=1}^{\floor{\pi_i N}}D_i \left(\log \mu^N(\fullV{x}) +1 \right) \Delta_{\fullV{e}_i^a}\left(\mu^N(\fullV{x})\right) \\
                                                & \quad +\frac12\sum_{\fullV{x}}\sum_{i,j=1}^n \sum_{a=1}^{\floor{\pi_i N}}\sum_{b=1}^{\floor{\pi_j N}}
                                                  \delta_{(i,a)\not=(j,b)}
                                                  \delta_{x_i^a=x_j^b}\frac{D_{ij}}{N}\left(\log \mu^N(\fullV{x}) +1 \right)\Delta_{(\fullV{e}_i^a + \fullV{e}_j^b)}\left(\mu^N(\fullV{x})\right) \\
                                                &= -\sum_{\fullV{x}}\sum_{i=1}^n \sum_{a=1}^{\floor{\pi_i N}}D_i \nabla^+_{(\fullV{e}_i^a)}\left(\log \mu^N(\fullV{x})\right)\cdot \nabla^+_{(\fullV{e}_i^a)}\left(\mu^N(\fullV{x})\right) \\
                                                & \quad -\frac12\sum_{\fullV{x}}\sum_{i,j=1}^n \sum_{a=1}^{\floor{\pi_i N}}\sum_{b=1}^{\floor{\pi_j N}}
                                                  \delta_{(i,a)\not=(j,b)}
                                                  \delta_{x_i^a=x_j^b}\frac{D_{ij}}{N}\nabla^+_{(\fullV{e}_i^a + \fullV{e}_j^b)}\left(\log \mu^N(\fullV{x})\right)\cdot \nabla^+_{(\fullV{e}_i^a + \fullV{e}_j^b)}\left(\mu^N(\fullV{x})\right) \\
                                                & \leq 0,
  \end{align*}
   thanks to the monotonicity of $x \mapsto \log x$.
\end{proof}

The evolution of the marginals is given by the BBGKY hierarchy.
\begin{lemma}[BBGKY hierarchy]
  \label{thm:bbgky}
  Suppose that $\mu^N \in \probP_s(\Omega_M^N)$ is the density
  evolving according to \eqref{eq:evolution-density-markov-chain}. Then the
  marginals evolve as
  \begin{equation*}
    \frac{\dd}{\dd t} \mu^{N;(p)}(\projV{x})
    = I + II + III,
  \end{equation*}
  where
  \begin{align*}
    I &=  D_i\sum_{i=1}^n\sum_{a=1}^{p_i}
        [\mu^{N;(p)}(\projV{x}+\projV{e}_i^a)+\mu^{N;(p)}(\projV{x}-\projV{e}_i^a)-2\mu^{N;(p)}(\projV{x})],
    \\
    II &= \frac{1}{2}
         \sum_{i=1}^n \sum_{a=1}^{p_i}
         \sum_{j=1}^n \sum_{b=1}^{p_j}
         \delta_{(i,a)\not=(j,b)}
         \delta_{x_i^a=x_j^b}
         \frac{D_{ij}}{N}
         [\mu^{N;(p)}(\projV{x}+\projV{e}_i^a+\projV{e}_j^b)+\mu^{N;(p)}(\projV{x}-\projV{e}_i^a-\projV{e}_j^b)-2\mu^{N;(p)}(\projV{x})],
  \end{align*}
  with $\projV{e}_i^a$ defined as the vector of size $p_1 + \dots + p_n$ with all coordinates with value
  $0$, except the coordinate of index
  $p_1 + \dots + p_{i-1} +a$, which value is $1/M=h$. Finally,
  \begin{align*}
  III
      &=
        \sum_{i=1}^n \sum_{a=1}^{p_i}
        \sum_{j=1}^n
        \sum_{x_j^{p_j+1} \in \color{blue}{\Omega_M}}
        \delta_{x_i^a=x_j^{p_j+1}} D_{ij} \frac{\floor{\pi_jN}-p_j}{N} \\
      &\quad
        \Big[\mu^{N;(p+e_j)}((\projV{x}\# x_j^{p_j+1})+\tilde{e}_i^a+\tilde{e}_j^{p_j + 1})
        +\mu^{N;(p+e_j)}((\projV{x}\# x_j^{p_j+1})-\tilde{e}_i^a-\tilde{e}_j^{p_j +1})
        -2\mu^{N;(p+e_j)}((\projV{x}\# x_j^{p_j+1}))\Big]
  \end{align*}
  with
  \begin{equation*}
    (\projV{x}\# x_j^{p_j+1})
    = (x_1^1,\dots,x_1^{p_1},\dots\dots,x_j^1,\dots,x_j^{p_j},x_j^{p_j+1},x_{j+1}^1,\dots, x_{j+1}^{p_{j+1}},\dots\dots,x_n^1,\dots,x_n^{p_n}),
  \end{equation*}
  i.e.\ $\projV{x}$ with $x_j^{p_j+1}$ added between $x_j^{p_j}$ and $x_{j+1}^1$, and where $\tilde{e}_i^a$
is defined  as the vector of size $p_1 + \dots + (p_j +1) + \dots + p_n$ with all coordinates with value $0$, except the coordinate of index $p_1 + \dots + p_{j} +1$,
  which value  is $1/M=h$.
\end{lemma}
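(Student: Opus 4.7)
The plan is to obtain the hierarchy by applying the projection $\projP^{N;(p)}$ directly to the master equation \eqref{eq:evolution-density-markov-chain}. Since $\projP^{N;(p)}$ commutes with $\tfrac{\dd}{\dd t}$, the left-hand side becomes $\tfrac{\dd}{\dd t}\mu^{N;(p)}(\projV{x})$, and what remains is to sum the right-hand side over the extra variables
$x_1^{p_1+1},\dots,x_1^{\floor{\pi_1 N}},\dots,x_n^{p_n+1},\dots,x_n^{\floor{\pi_n N}}$
and identify three groups of contributions: one purely internal to $\projV{x}$ (term $I$), one coming from self-interactions inside $\projV{x}$ (term $II$), and one coming from interactions of a ``tagged'' particle of $\projV{x}$ with a ``bath'' particle that has been summed out (term $III$).

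For the linear diffusion term, I would split the sum $\sum_{i,a}$ according to whether $a\le p_i$ or $a>p_i$. When $a\le p_i$, the displacement $\pm\fullV{e}_i^a$ only affects a coordinate kept in the marginal, so summing over the bath variables simply replaces $\mu^N$ by $\mu^{N;(p)}$, producing exactly term $I$. When $a>p_i$, the displacement acts on a summed-out variable $x_i^a$; by the periodicity of $\Omega_M$ the translation $x_i^a\mapsto x_i^a\pm h$ is a bijection of $\Omega_M$, so the sum $\sum_{x_i^a\in\Omega_M}\mu^N(\fullV{x}\pm \fullV{e}_i^a)$ equals $\sum_{x_i^a\in\Omega_M}\mu^N(\fullV{x})$ and the discrete Laplacian contribution vanishes.

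For the quadratic term I would split the double sum $\sum_{(i,a),(j,b)}$ into four regions according to whether $a\le p_i$ or $a>p_i$ and similarly for $b$. The region $a\le p_i,b\le p_j$ contributes to $II$ by the same reasoning as above. The region $a>p_i,b>p_j$ again vanishes, because both displaced variables are summed out and the periodic sum is translation-invariant, while the delta $\delta_{x_i^a=x_j^b}$ only depends on whether the two free variables coincide, which is preserved by the joint shift. The mixed regions $a\le p_i,b>p_j$ and $a>p_i,b\le p_j$ produce equal contributions by symmetry of $D_{ij}=D_{ji}$ and of the summation, and their sum cancels the overall $\tfrac{1}{2}$ prefactor. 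In the surviving case $a\le p_i$, $b>p_j$, I perform all the bath sums except the one over $x_j^b$; by indistinguishability in $\probP_s$ all values of $b\in\{p_j+1,\dots,\floor{\pi_j N}\}$ give the same sum, producing the multiplicity $\floor{\pi_j N}-p_j$, and the remaining free variable $x_j^b$ is relabelled $x_j^{p_j+1}$ and inserted at the appropriate slot via the $\#$-operation, with the jump vector accordingly denoted $\tilde{e}_j^{p_j+1}$ on the enlarged configuration. This is exactly term $III$.

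The main obstacle is not any single identity but the combinatorial bookkeeping in this last step: one must check that the factors $1/2$, $1/N$, and $\floor{\pi_j N}-p_j$ combine correctly, that the delta $\delta_{x_i^a=x_j^b}$ becomes $\delta_{x_i^a=x_j^{p_j+1}}$ after relabelling, and that the Laplacian $\Delta_{(\fullV{e}_i^a+\fullV{e}_j^b)}$ applied to the full density $\mu^N$ turns, after summation over the other bath variables, into the Laplacian applied to the $(p+e_j)$-marginal in the variables $\tilde{e}_i^a+\tilde{e}_j^{p_j+1}$. Once the four-case split is written out and the indistinguishability is carefully used to identify the mixed cases and to justify the multiplicity $\floor{\pi_j N}-p_j$, the formula $I+II+III$ follows directly.
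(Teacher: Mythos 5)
Your proposal is correct and follows essentially the same route as the paper: apply the projection $\projP^{N;(p)}$ to the master equation, observe that the purely summed-out contributions vanish by translation invariance on the periodic lattice, obtain $I$ and $II$ from the retained indices, and get $III$ from the mixed terms by ordering the pair $(i,a)$, $(j,b)$ to absorb the factor $\tfrac12$ and using indistinguishability to produce the multiplicity $\floor{\pi_jN}-p_j$. The paper's proof is only a two-line sketch of exactly this argument, so your write-up is simply the fully detailed version of it.
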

The term $I$ is the standard linear diffusion. The term $II$ is the
quadratic interaction between the considered particles, which should
be negligible as $N \to \infty$. The term $III$ is the interaction
between the considered particles and the averaged particles, which
leads to the quadratic term. The interaction between the averaged
particles does not appear in the projection.

\begin{proof}
  Take the projection $\projP^{N;(p)}$. The terms $I$ and $II$ follow
  directly. The third term appears as
  \begin{equation*}
    III =
    \sum_{i=1}^n \sum_{a=1}^{p_i}
    \sum_{j=1}^n \sum_{b=p_j+1}^{\floor{\pi_jN}}
    \delta_{x_i^a=x_j^b}
    \frac{D_{ij}}{N}
    \projP^{N;(p)}[\mu^{N}(\fullV{x}+\fullV{e}_i^a+\fullV{e}_j^b)+\mu^{N}(\fullV{x}-\fullV{e}_i^a-\fullV{e}_j^b)-2\mu^{N}(\fullV{x})],
  \end{equation*}
  where we ordered the pair $(i,a)$ and $(j,b)$ so that the factor
  $1/2$ is not appearing there. Thanks to the indistinguishability, this
  takes the claimed form.
\end{proof}

Thus, as usually in the BBGKY hierarchy (tracing back to \cite{Bog46}),  in order to compute the evolution of the one-particle marginals
$u_i$, we need the knowledge of the two-particle marginals, whose
evolution in turn requires the three-particle marginals.

In order to close an equation on $u_i$, we thus need an additional
assumption. This assumption has been identified as chaos by Kac \cite{Kac56}, in
a mathematical setting following the famous Sto{\ss}zahlansatz by
Boltzmann (first suggested by J.~Clerk~Maxwell in  \cite{Max1867}).
It states that in the limit $N \to \infty$, the
different particles are becoming independent. In terms of the measure
$\mu^N$, it means that
\begin{equation}
  \label{eq:statement-chaos}
  \begin{aligned}
    &\mu^{N}(x_1^1,\dots,x_1^{\floor{\pi_1 N}},x_2^1,\dots,x_2^{\floor{\pi_2
      N}},\dots\dots,x_n^1,\dots,x_n^{\floor{\pi_n N}}) \\
    &\approx u_1(x_1^1)\dotsm u_1(x_1^{\floor{\pi_1 N}}) \, u_2(x_2^1)\dotsm u_2(x_2^{\floor{\pi_2 N}})\dotsm\dotsm u_n(x_n^1)\dotsm
    u_n(x_n^{\floor{\pi_nN}}),
  \end{aligned}
\end{equation}
as $N\to \infty$. {\color{blue} The formal idea is that the interaction
between two particles is scaled as $N^{-1}$ so that the correlation
between two particles should also be scaled as $N^{-1}$. Therefore, as
$N\to\infty$, the particles become independent in the limit. In a way
for a rigorous mathematical treatment, Kac suggested to initially
assume the factorisation and then prove that this is preserved in time
with an error going to zero as $N\to\infty$ (propagation of chaos).}

\begin{proposition} \label{six}
  Formally, as $N \to \infty$ we have under the Sto{\ss}zahlansatz
  that the marginals $u_i$ evolve as
  \begin{align} \label{eqsix}
    \frac{\dd}{\dd t} u_i(x) &= D_i \Big[u_i (x+h) + u_i (x-h) - 2 u_i (x)\Big]\\
    &\quad +\sum_{j=1}^{n} D_{i j} \pi_j
    \Big[u_j(x+h)u_i (x+h) + u_j(x-h)u_i (x-h) - 2 u_j(x)u_i (x)\Big].\notag
    \end{align}
\end{proposition}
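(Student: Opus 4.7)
The plan is to specialise the BBGKY hierarchy from \cref{thm:bbgky} to the case $p = e_i$ (so that $\mu^{N;(p)} = u_i$) and analyse the three terms $I$, $II$, $III$ in turn, sending $N\to\infty$ under the factorisation \eqref{eq:statement-chaos}.

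With $p = e_i$, we have $p_i = 1$ and $p_j = 0$ for $j \neq i$, so the indices $a,b$ collapse. Term $I$ reduces immediately to the single summand $D_i\big[u_i(x+h) + u_i(x-h) - 2 u_i(x)\big]$, which is the first line of \eqref{eqsix}. Term $II$ vanishes outright: the only available pair of indices in the double sum is $(i,1)$ versus $(i,1)$, which is excluded by the Kronecker factor $\delta_{(i,a)\neq(j,b)}$. Hence the whole quadratic contribution must come from $III$. Writing out $III$ for $p = e_i$, the outer sum has a single term $(i,a) = (i,1)$, and for each $j$ the factor $\frac{\floor{\pi_j N} - p_j}{N}$ equals either $\pi_j - O(1/N)$ (for $j \neq i$) or $\pi_i - 1/N - O(1/N)$ (for $j = i$); in both cases it converges to $\pi_j$ as $N\to\infty$. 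The Kronecker factor $\delta_{x_i^1 = x_j^{p_j+1}}$ collapses the sum over $x_j^{p_j+1} \in \Omega_M$ to the single contribution at $x_j^{p_j+1} = x$.

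At this point I invoke the Stoßzahlansatz \eqref{eq:statement-chaos}: the two-particle marginal $\mu^{N;(e_i + e_j)}$ (for $j\neq i$) factorises as $u_i(x_i^1)\,u_j(x_j^1)$, and $\mu^{N;(2 e_i)}$ as $u_i(x_i^1)\,u_i(x_i^2)$. Plugging this in and using that the translated configurations $(\projV{x}\#x_j^{p_j+1}) \pm \tilde{e}_i^1 \pm \tilde{e}_j^{p_j+1}$ differ only by shifting the two coordinates marked by $i$ and $j$, the bracket in $III$ becomes $u_i(x+h)u_j(x+h) + u_i(x-h)u_j(x-h) - 2 u_i(x)u_j(x)$. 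Summing over $j$ with the weights $D_{ij}\pi_j$ yields exactly the second line of \eqref{eqsix}.

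The genuinely routine part is the index bookkeeping; the single conceptual point is that the chaos assumption is imposed \emph{at the level of the projected variables} and so the factorisation must be applied consistently for $j=i$ as well as $j\neq i$, and that the $O(1/N)$ difference between $\frac{\floor{\pi_j N}-p_j}{N}$ and $\pi_j$ is absorbed in the formal limit. The main place where care is needed is the argument that the collapse of $\delta_{x_i^a = x_j^{p_j+1}}$ together with the shifted configurations gives genuinely the product $u_i(x\pm h)\,u_j(x\pm h)$ at the \emph{same} shifted position; this follows because the jump in the underlying Markov chain (see \cref{def:markov-chain-particle-model}) moves the two interacting particles by the same vector $h$, which is precisely the mechanism encoded in $\tilde{e}_i^1 + \tilde{e}_j^{p_j+1}$ and which is what ultimately produces the conservative nonlinearity $\Delta(u_i u_j)$ in \eqref{eq:intro-skt} after the spatial discretisation limit.
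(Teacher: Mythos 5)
Your proof is correct and follows essentially the same route as the paper's (much terser) argument: specialise the BBGKY hierarchy of \cref{thm:bbgky} to $p=e_i$, observe that $II$ drops out, use $\frac{\floor{\pi_j N}-p_j}{N}\to\pi_j$ in $III$, and factorise the two-particle marginal via the chaos assumption \eqref{eq:statement-chaos}. The extra bookkeeping you supply (exact vanishing of $II$ for $p=e_i$, the collapse of the sum over $x_j^{p_j+1}$ by the Kronecker delta, and the role of the common jump $\tilde{e}_i^1+\tilde{e}_j^{p_j+1}$) is consistent with what the paper leaves implicit.
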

\begin{proof}
  This follows from \cref{thm:bbgky}, where in the term $III$, we see
  that  $\frac{\floor{\pi_jN}-p_j}{N}\to\pi_j$, and we reduce the two-marginal
  density as a product by \eqref{eq:statement-chaos}.
\end{proof}

This gives the desired quadratic master equation with the final rate
$A_{ij} := D_{ij} \pi_j$. This is equivalent to the detailed balance
equation \eqref{eq:detailed-balance-skt}, which follows from the
symmetry $D_{ij}=D_{ji}$ in the following way:
\begin{align*}
 \pi_i A_{ij} = \pi_i D_{ij} \pi_j = \pi_j D_{ji} \pi_i = \pi_j A_{ji}\quad \mbox{for all}~~1\leq i,j \leq n.
\end{align*}

 Thanks to the chaos assumption, we can relate the relative entropy of $\mu^N$ to the relative entropy of the
 $u_i$ using the following proposition.
\begin{proposition}\label{prop.entropy}
  Assume
  \begin{equation*}
    \begin{aligned}
      &\mu^{N}(x_1^1,\dots,x_1^{\floor{\pi_1 N}},x_2^1,\dots,x_2^{\floor{\pi_2
        N}},\dots\dots,x_n^1,\dots,x_n^{\floor{\pi_n N}}) \\
      &= u_1(x_1^1)\dotsm u_1(x_1^{\floor{\pi_1 N}}) \, u_2(x_2^1)\dotsm u_2(x_2^{\floor{\pi_2 N}})\dotsm\dotsm u_n(x_n^1)\dotsm
      u_n(x_n^{\floor{\pi_nN}}),
    \end{aligned}
  \end{equation*}
  then
  \begin{equation*}
  \frac{1}{N}  \tilde{\mathcal{H}}(\mu^N) = \sum_{\fullV{x}} \mu^N(\fullV{x})
    \log\left( \frac{\mu^N(\fullV{x})}{M^{(\floor{\pi_1 N}+\dots+\floor{\pi_nN})}} \right)
    = \frac1{N}\,\sum_{i=1}^n \sum_{\ell=0}^{M-1} \floor{\pi_i N} u_i(x_{\ell})
    \log \left(\frac{u_i(x_{\ell})}{M} \right)
  \end{equation*}
\end{proposition}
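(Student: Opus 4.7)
The proof is a direct algebraic computation, so the plan is to substitute the product ansatz into the definition of $\tilde{\mathcal{H}}$ and then simplify using the normalisation $\sum_{x\in\Omega_M} u_i(x) = 1$ (which holds because $u_i = \mu^{N;(e_i)}$ is a marginal probability distribution). First I would apply the logarithm to the factorised $\mu^N$, obtaining
\[
\log \mu^N(\fullV{x}) = \sum_{i=1}^n \sum_{a=1}^{\floor{\pi_i N}} \log u_i(x_i^a),
\]
so that the integrand in $\tilde{\mathcal{H}}(\mu^N)$ becomes $\mu^N(\fullV{x})$ times a finite sum of one-variable logarithmic terms minus the constant $\sum_i \floor{\pi_i N} \log M$.

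Next I would distribute the sum over $\fullV{x}\in\Omega_M^N$ through this decomposition. For any fixed pair $(j,b)$, the corresponding contribution reads
\[
\sum_{\fullV{x}} \Bigg(\prod_{(i,a)} u_i(x_i^a)\Bigg) \log u_j(x_j^b),
\]
and since all factors with $(i,a)\neq(j,b)$ sum to $1$ by normalisation of $u_i$, this collapses to $\sum_{x\in\Omega_M} u_j(x)\log u_j(x)$. Summing over the $\floor{\pi_j N}$ indices $b$ and the species $j$ yields
\[
\sum_{\fullV{x}} \mu^N(\fullV{x}) \log \mu^N(\fullV{x}) = \sum_{j=1}^n \floor{\pi_j N} \sum_{x\in\Omega_M} u_j(x)\log u_j(x).
\]

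Finally I would combine this with the normalisation constant: writing $\log M = \sum_{x\in\Omega_M} u_j(x)\log M$ (again by $\sum_x u_j(x)=1$), I obtain
\[
\tilde{\mathcal{H}}(\mu^N) = \sum_{j=1}^n \floor{\pi_j N} \sum_{\ell=0}^{M-1} u_j(x_\ell) \log\!\left(\frac{u_j(x_\ell)}{M}\right),
\]
and dividing by $N$ gives the claim. The only conceptual step is recognising that the one-particle marginals of $\mu^N$ are indeed the $u_i$ appearing in the product — the rest is bookkeeping over indices. I do not expect any genuine obstacle here; the argument is a finite-dimensional analogue of the well-known identity that the Boltzmann entropy of a product measure equals the sum of the entropies of its factors.
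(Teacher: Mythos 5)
Your proof is correct and follows exactly the route the paper takes (the paper's proof is the one-line remark ``expand the logarithm of the product and use that the $u_i$ are probability distributions''); you have simply written out the bookkeeping explicitly. Nothing further is needed.
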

\begin{proof}
  This follows from expanding the logarithm as product and using that
 the  $u_i$ are probability distributions.
\end{proof}

When $N \to \infty$, this last quantity converges towards $\sum_{i=1}^n \pi_i \, \sum_{\ell=0}^{M-1} u_i(x_{\ell})\,
    \log \left(\frac{u_i(x_{\ell})}{M} \right)$.
 Because of Lemma \ref{lemma.tilde.H} and Proposition \ref{six}, we expect that this quantity decreases along the flow of eq. (\ref{eqsix}). We shall indeed prove this in the next section, thus establishing the link between the
entropy structure for eq. (\ref{eqsix}) (and its limit when the discretisation step $h$ tends to $0$) and the  classical
relative entropy of Markov chains.

\section{Rigorous derivation to the cross-diffusion system}\label{sec:rigorous.derivation}

Starting from the Markov chain defined in
Definition \ref{def:markov-chain-particle-model}, we showed in  the last section how performing the mean-field limit on the formal
level leads to the spatial discretisation
(\ref{eqsix}) of the SKT system (\ref{eq:intro-skt}).  In fact, we
expect this particular discretisation to preserve the entropy
structure of the Markov chain. In this section we shall check this
property and use it to pass rigorously to the limit when the
discretisation step $h$ tends to $0$, thus recovering the existence of
weak solutions for the SKT model.

For this, we recall the discretisation
$\Omega_M = \{x_k = kh : k=0,\dots,M-1\}$ with $h=M^{-1}$ from
\eqref{eq:def-omega-m}. Moreover, we introduce the discrete derivatives and
discrete Laplacian by
\begin{align*}
  (\nabla^+_h f) (x) & := \frac{f(x+h) - f(x)}{h},
                  \qquad  (\nabla^-_h f) (x)  := \frac{f(x) - f(x-h)}{h}, \\
  (\Delta_h f)(x) & := [\nabla^-_h(\nabla^+_h f)](x)
                    = [\nabla^+_h(\nabla^-_h f)](x)
                    = \frac{f(x+h) + f(x-h) - 2 f(x)}{h^2}.
\end{align*}
We now rewrite \eqref{eqsix} together with its initial boundary
conditions (and $A_{ij} := \pi_j\, D_{ij}$), after a suitable rescaling in time (such that $\pa_t$ is replaced by $h^2\,\pa_t$). This yields

\begin{equation}\label{ode}
  \left\{
  \begin{lgathered}
    \pa_t u_i(t,x_k) = D_i\,[\Delta_h u_i(t,\cdot)](x_k) + \left[\Delta_h \left(u_i(t,\cdot)\sum_{j=1}^n A_{ij}u_j(t,\cdot)\right)\right] (x_k) , \quad k=0,\dots,M{-}1,\\
    u_i(0,x_k) =u_i^0(x_k)\ge 0, \qquad k=0,\ldots,M{-}1, \quad i=1,\ldots,n,\\
    u_i(t,x_0) = u_i(t,x_M), \quad  u_i(t,x_{-1}) = u_i(t,x_{M-1}), \qquad \forall t, \quad i=1,\ldots,n.
  \end{lgathered}
  \right.
\end{equation}

Given the values $(w(x_k))_{k=0,\dots,M-1}$ over $\Omega_M$, let
$\tilde{w} : \T \mapsto \R$ be the linear interpolant ($P_1$ discretisation), which can be
defined by
\begin{align}\label{tilde.u}
  \tilde{w}(x) := \sum_{k=0}^{M-1} w(x_k)\, T(x - x_k) + w(x_0)\, T(x - x_M),
\end{align}
where
\begin{align}\label{defi.T}
  T(x) = (1 - |x|/h)\, \indicator_{[\,|x|\le h\,]}.
\end{align}
With this we can state our main theorem.
\begin{theorem}
  \label{main}
  Let $D_i \ge 0$ and $A_{ij} \ge 0$ be coefficients satisfying
  \renewcommand{\theenumi}{\roman{enumi}}
  \begin{enumerate}
  \item $A_{ii} > 0$ (strict positivity of self-diffusion),
  \item $\pi_i A_{ij} = \pi_j A_{ji}$ for some constants $\pi_i >0$
    (detailed balance equation).
  \end{enumerate}
  We also assume continuous positive initial data $u_i^0:= u_i^0(x) > 0$ on
  $\mathbb{T}$ for all $1 \leq i \leq n$.

  Then for all $M \in \mathbb{N}$, there exists a unique global
  solution $u_i := u_i(t,x_k) > 0$ of class $C^{\infty}$ to the
  discrete system (\ref{ode}) with $h := 1/M$.

  Denoting $[\tilde{u}_i]^M$ the interpolant obtained from
  $u_i(t,x_k)$ by formula (\ref{tilde.u}), then there exists a
  subsequence such that the following holds:
  $[\tilde{u}_i]^M \to_{M \to \infty} u_i$ in
  $L^{4 -\var}([0,T] \times \mathbb{T})$ for all $T>0$ and $\var >0$,
  where $u_i \in L^4([0,T]\times \T) \cap L^2([0,T], H^1(\T))$ is a
  weak solution to the SKT system
  $\pa_t u_i = \Delta (D_i \, u_i + \sum_{j=1}^n A_{ij} u_i\,u_j)$
  (with initial data $u_i^0$ and periodic boundary conditions), in the
  following sense: For all
  $\varphi \in C^2_c([0,\infty) \times \mathbb{T})$ and for all
  $i=1,\ldots,n$,
  \begin{equation*}
    \begin{split}
      &- \int_{\mathbb{T}} u_i(0,x) \, \varphi(0,x) \,\dd x -
      \int_0^{\infty}\int_{\mathbb{T}} u_i(t,x) \pa_t \varphi(t,x) \, \dd x\,
      \dd t \\
      &= \int_0^{\infty}\int_{\mathbb{T}} \bigg[ D_i\,u_i(t,x) + \sum_{j=1}^n
      A_{ij}\,u_i(t,x)\, u_j(t,x)\bigg]\, \Delta\varphi(t,x) \,\dd x\,
      \dd t.
    \end{split}
  \end{equation*}
\end{theorem}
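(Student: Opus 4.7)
The plan is to treat (\ref{ode}) as a finite-dimensional ODE system for each $M$, extract uniform-in-$M$ estimates from a discrete version of the entropy identity of \cref{lemma.tilde.H}, prove compactness of the interpolants, and then pass to the limit in the weak formulation. For fixed $M$, (\ref{ode}) is an autonomous polynomial ODE in the $nM$ unknowns $(u_i(t,x_k))$, so Cauchy--Lipschitz produces a local $C^\infty$ solution. Positivity is a tangent argument: if $u_i(t,x_{k_0})$ first touches zero, the factor $u_i(x_{k_0})$ drops out of every quadratic term attached to the site $x_{k_0}$, and the surviving contributions from the neighbours $x_{k_0\pm 1}$ carry nonnegative coefficients, forcing $\pa_t u_i(t,x_{k_0})\ge 0$. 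Global existence then follows because discrete mass conservation (summing (\ref{ode}) with the periodic boundary conditions and using $\sum_k \Delta_h f(x_k)=0$) gives $u_i(t,x_k)\le M\cdot\mathrm{mass}$, ruling out finite-time blow-up.

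The heart of the argument is the discrete entropy identity. Setting
\begin{equation*}
  \entropyH_M(t) := \sum_{i=1}^n \pi_i \, h \sum_{k=0}^{M-1} \big[u_i(t,x_k)\log u_i(t,x_k) - u_i(t,x_k) + 1\big],
\end{equation*}
and repeating the discrete integration-by-parts calculation of \cref{lemma.tilde.H} -- now on $\Omega_M$ rather than on the product space $\Omega_M^N$ -- the detailed balance $\pi_i A_{ij}=\pi_j A_{ji}$ symmetrises the cross-terms into a nonnegative quadratic form in $\nabla_h^+ u$, producing
\begin{equation*}
  \frac{\dd}{\dd t}\entropyH_M(t) + \mathcal{D}_M^{\mathrm{lin}}(t) + \mathcal{D}_M^{\mathrm{quad}}(t) = 0
\end{equation*}
with both dissipation terms nonnegative. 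Applying $(\log a-\log b)(a-b)\ge 4(\sqrt a-\sqrt b)^2$ to $\mathcal{D}_M^{\mathrm{lin}}$ and isolating the $A_{ii}>0$ diagonal piece of $\mathcal{D}_M^{\mathrm{quad}}$ will yield uniform-in-$M$ bounds on $\|u_i\|_{L^\infty(0,T;L\log L(\T))}$ and on $\|\nabla_h^+ u_i\|_{L^2((0,T)\times\Omega_M)}$; combined with mass conservation, this controls the piecewise-linear interpolant $[\tilde u_i]^M$ from (\ref{tilde.u}) uniformly in $L^\infty(0,T;L^1(\T))\cap L^2(0,T;H^1(\T))$.

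For compactness and passage to the limit, the one-dimensional embedding $H^1(\T)\hookrightarrow L^\infty(\T)$ combined with the $L^\infty(L^1)$ bound yields via Gagliardo--Nirenberg a uniform bound on $[\tilde u_i]^M$ in $L^4((0,T)\times\T)$, and in turn a uniform $L^2((0,T)\times\T)$ bound on the flux $F_i := D_i u_i + \sum_j A_{ij} u_i u_j$. Reading $\pa_t u_i(t,x_k)=[\Delta_h F_i](t,x_k)$ off (\ref{ode}), the interpolant $\pa_t [\tilde u_i]^M$ is uniformly bounded in $L^2(0,T;H^{-2}(\T))$. Aubin--Lions then extracts a subsequence converging strongly in $L^2((0,T)\times\T)$, and interpolation with the $L^4$ bound upgrades this to $L^{4-\var}$ for every $\var>0$. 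To obtain the weak formulation I would multiply (\ref{ode}) by $h\,\varphi(t,x_k)$, sum over $k$, integrate in $t$, apply discrete integration by parts twice to shift both $\Delta_h$ operators onto $\varphi$ (with $\Delta_h\varphi\to\Delta\varphi$ uniformly, error $O(h^2)$), recognise the resulting sums as Riemann integrals against the interpolants, and pass to the limit: the linear term follows from the weak $L^2$ limit, while the quadratic term $[\tilde u_i]^M [\tilde u_j]^M \to u_i u_j$ converges in $L^{(4-\var)/2}\supset L^1$ thanks to the strong $L^{4-\var}$ convergence of each factor.

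The main obstacle will be obtaining the uniform $L^2(H^1)$ estimate on $u_i$ itself (and not just on $\sqrt{u_i}$, which the linear part of the dissipation alone would provide). This estimate is required both to deduce time compactness via Aubin--Lions and to make sense of the nonlinearity $u_i u_j$ in the limit. It hinges delicately on the interplay of two ingredients: the detailed balance $\pi_i A_{ij}=\pi_j A_{ji}$, which is precisely what turns $\mathcal{D}_M^{\mathrm{quad}}$ into a symmetric quadratic form in $\nabla_h^+ u$, and the strict positivity $A_{ii}>0$, which makes this form coercive along the diagonal. Without either, the entropy dissipation fails to be sign-definite and the whole $h\to 0$ scheme collapses. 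A secondary but harmless technical point is the interpolation error between Riemann sums and integrals of products of interpolants, which is $O(h)$ and therefore vanishes once paired against the $L^1$-bounded densities.
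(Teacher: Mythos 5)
Your proposal follows essentially the same route as the paper: Cauchy--Lipschitz plus discrete mass conservation for the global solution of \eqref{ode}; the discrete entropy dissipation of $\entropyH^h$ whose diagonal ($i=j$) part --- where $\sqrt{u_iu_i}=u_i$ and $A_{ii}>0$ --- yields the uniform $L^2(H^1)$ bound on $u_i$ itself; Gagliardo--Nirenberg for the $L^4$ bound; Aubin--Lions for strong $L^{4-\var}$ convergence; and an $O(h)$ control of the mismatch between the interpolant of $u_iu_j$ and the product of the interpolants when passing to the limit in the discrete weak formulation. The only deviations are minor technical variants: the paper obtains the claimed \emph{strict} positivity via the Gronwall lower bound $u_i(t,x_k)\ge u_i(0,x_k)\exp(-Ct/h^2)$ rather than your subtangent argument, which only yields $u_i\ge 0$ and would need to be supplemented since $\log u_i$ enters the entropy dissipation; and the paper's time-compactness input is the bound $\int_0^T\big|\pa_t\int_{\T}\tilde u_i\,\phi\,\dd x\big|\,\dd t\le C_T\|\phi\|_{W^{1,\infty}}$ obtained by a single discrete integration by parts, instead of your $L^2(0,T;H^{-2})$ bound on $\pa_t[\tilde u_i]^M$ --- both serve Aubin--Lions equally well.
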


The existence of weak solutions to the SKT model has been known for a
long time in the case of two equations, and it has been studied more
recently in the case of more than two species, under the detailed
balance condition (cf.\ \cite{CJ06,CDJ18}). We do not go
beyond the existing theory of existence in this paper. We, however,
present an approximation procedure which is extremely simple (it
consists only in discretizing w.r.t. the space variable)
compared to most previous procedures (cf. for
example \cite{DLMT15,DDJ17}).

Though this approximation procedure is presented here only in the
specific case of the quadratic SKT model without reaction terms under
the assumption of detailed balance in dimension $1$ and in the
presence of self-diffusion, our feeling is that it can be easily
extended to more general cases. First, one can introduce (not too quickly increasing) reaction terms. Second, one can go to higher space dimensions $d\geq 1$ keeping periodic boundary conditions. In a third step, by introducing a reasonable grid, one can expect that the same procedure  works for any reasonably smooth domain with Neumann boundary conditions.
Moreover, one can also think of non quadratic cases, provided that a good Lyapunov
functional is known, or of the quadratic case without self-diffusion when the standard
diffusion term or the reaction terms are sufficient to guarantee the
equintegrability. It is less clear if duality arguments (cf. for
example \cite{DLM14}) are compatible with this approximation (as they
are when time discretisation is performed, cf. \cite{LM17}): this
issue will be investigated in future works.

The possibility of extending the formal results of the first part to
more general systems (thus giving a microscopic background for an
entropy structure which is known to exist at the macroscopic level)
will also be studied further, especially in the direction of non
quadratic systems, and systems presenting exclusion processes, see for
instance \cite{BDPS10, BSW12, BHRW16}. \medskip

We now begin the
\medskip

{\bf{Proof of Theorem \ref{main}}}:

We first observe that there exists a unique global solution
$t\in \R_+ \mapsto (u_1(t, x_k),\ldots,u_n(t,x_k))$ with
$u_i(t,x_k)\ge 0$ to the ODE system \eqref{ode}. We briefly sketch the proof of this result, which uses standard theorems for ODEs.
\par
We denote by $T_1>0$ the maximal time of existence for the equation (obtained thanks to Cauchy-Lipschitz theorem), and by $T_2 \in ]0, T_1[$ the maximal time for which  $u_i(t,x_k)>0$ for all $i,k$ and $t\in [0,T_2[$. Note that $T_2>0$ because all initial data are assumed to be strictly positive.
\par
On the interval $[0,T_2[$, we use the conservation of
the total number of individuals (of each species)
\begin{equation*}
  \frac{\dd}{\dd t} \left( \sum_{k=0}^{M-1} u_i(t, x_k)\right) = 0,
\end{equation*}
for $i=1,\dots,n$, and get that
\begin{equation}\label{tt}
 \forall t\in ]0,T_2[,\,\, i=1,..,n,\,\, k=0,..,M-1, \qquad 0 \le  u_i(t, x_k) \le C,
 \end{equation}
where $C : = \sup_{i=1,..,n} \bigg[ \sum_{k=0}^{M-1} u_i(0, x_k) \bigg]$.
\par
Then, we observe that on the interval $[0,T_2[$,
\begin{align*}
 h^2 \,\partial_t u_i(t,x_k) &=  D_i \left[u_i(t,x_k+h) + u_i(t, x_k-h) - 2 u_i(t, x_k)\right] \\
 &\quad +  \sum_{j=1}^n A_{ij}\left[(u_i u_j)(t,x_k+h) + (u_i u_j)(t, x_k-h) - 2 (u_iu_j)(t, x_k)\right] \\
 &\geq -2 D_i\, u_i(t,x_k) -2 \sum_{j=1}^n A_{ij}(u_iu_j)(t,x_k) \\
 &\geq -2\,(D_i + C  \sum_{j=1}^n A_{ij})\, u_i(t,x_k),
\end{align*}
and consequently
\begin{align*}
    u_i(t,x_k) \geq u_{i}(0,x_k)\exp\bigg(- \frac{2}{h^2} \,\left[D_i + C  \sum_{j=1}^n A_{ij} \right]\, T_2 \bigg)>0 \quad \mbox{for all}~~t \in [0,T_2].
\end{align*}
Then $T_2=T_1$, and finally thanks to estimate eq. (\ref{tt}), $T_1=\infty$.

\subsection{Discrete system}

We start by studying the discrete system and establishing the main
{\it{a priori}} estimates, which are uniform with respect to the
spatial discretisation $M$. Those estimates are a direct consequence
of the entropy structure of our models.

\begin{lemma} \label{thm:discrete-a-priori} Under the same assumptions
  on the coefficients and initial data as in \cref{main}, the
  unique solution to the system \eqref{ode} satisfies the following a
  priori estimates, for some constants $C_T>0$ depending only on $T$,
  the initial data, and the coefficients $\pi_i$, $A_{ij}$ and $D_i$:
  \begin{align}\label{estim.1}
    \sum_{i=1}^{n} \int_0^T \,\,  h\, \sum_{k=0}^{M-1} |(\nabla^+_h u_{i}) (t,x_k)|^2
    \dd t \le C_T,
  \end{align}
  and
  \begin{align}\label{estim.2}
    \frac{\dd}{\dd t} \entropyH^h(u(t,\cdot)) \le 0, \qquad
    \sup_{t \in [0,T]} \entropyH^h(u(t,\cdot))
    \leq C_T,
  \end{align}
  where
  \begin{equation}\label{entropy.h}
   \entropyH^h(u) := \sum_{i=1}^{n} h \sum_{k=0}^{M-1} \pi_i \,\bigg[u_i(x_k) \log(u_i(x_k)) - u_i(x_k) +1 \bigg]\,.
 \end{equation}
\end{lemma}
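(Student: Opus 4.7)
The plan is to derive an entropy dissipation identity by differentiating $\entropyH^h(u(t,\cdot))$ in time and using the equation \eqref{ode}, and then to read off both estimates from it.

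Starting from $\frac{\dd}{\dd t} \entropyH^h(u(t,\cdot)) = \sum_{i=1}^{n} \pi_i \, h \sum_{k=0}^{M-1} \log u_i(t,x_k)\, \pa_t u_i(t,x_k)$, I would substitute the right-hand side of \eqref{ode} and apply the discrete summation-by-parts identity $h\sum_k f(x_k)(\Delta_h g)(x_k) = -h\sum_k (\nabla_h^+ f)(x_k)(\nabla_h^+ g)(x_k)$, which is valid by periodicity (as in the proof of \cref{lemma.tilde.H}). This rewrites the dissipation as $-\sum_i \pi_i D_i \, h\sum_k (\nabla_h^+ \log u_i)(\nabla_h^+ u_i) - \sum_{i,j} \pi_i A_{ij}\, h\sum_k (\nabla_h^+ \log u_i)\bigl(\nabla_h^+(u_i u_j)\bigr)$. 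The symmetry $\pi_i A_{ij} = \pi_j A_{ji}$ (detailed balance) allows one to symmetrise the double sum: averaging over the swap $i \leftrightarrow j$ and using $\nabla_h^+ \log u_i + \nabla_h^+ \log u_j = \nabla_h^+ \log(u_i u_j)$ turns it into $\tfrac12 \sum_{i,j} \pi_i A_{ij}\, h\sum_k \bigl(\nabla_h^+(u_i u_j)\bigr)\bigl(\nabla_h^+ \log(u_i u_j)\bigr)$. Both contributions to $-\frac{\dd}{\dd t}\entropyH^h$ are now sums of products $(\nabla_h^+ f)(\nabla_h^+ \log f) \ge 0$, giving the entropy decay.

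To extract the quantitative bound \eqref{estim.1}, I would isolate the diagonal $i=j$ terms of the symmetrised sum, which thanks to $A_{ii}>0$ control $|\nabla_h^+ u_i|^2$ through the elementary inequality $(a-b)(a+b)(\log a - \log b) \ge 2(a-b)^2$. This inequality follows from $\log a - \log b \ge 2(a-b)/(a+b)$, itself obtained by applying Jensen's inequality to the convex function $t\mapsto 1/t$. Choosing $a = u_i(x_{k+1})$ and $b = u_i(x_k)$ yields $\bigl(\nabla_h^+ u_i^2\bigr)\bigl(\nabla_h^+ \log u_i^2\bigr) \ge 4(\nabla_h^+ u_i)^2$, hence with $c := 2\min_i \pi_i A_{ii} > 0$ one gets $\frac{\dd}{\dd t} \entropyH^h(u(t,\cdot)) \le - c \sum_i h \sum_k (\nabla_h^+ u_i)(t,x_k)^2$. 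Integrating in time, using $x\log x - x + 1 \ge 0$ so that $\entropyH^h \ge 0$, together with the uniform bound $\entropyH^h(u^0) \le C$ independent of $M$ (a Riemann sum of a continuous, strictly positive $u_i^0$ on $\T$), gives both \eqref{estim.1} and \eqref{estim.2}.

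The main subtle point is the symmetrisation of the cross-diffusion contribution: it is precisely the detailed balance hypothesis that converts the a priori indefinite off-diagonal double sum into a manifestly non-negative quantity, without which $\entropyH^h$ would not be a Lyapunov functional. The extraction of the $H^1$ bound then depends essentially on $A_{ii}>0$: without strict self-diffusion the entropy dissipation would only control $\nabla_h^+(u_i u_j)$ jointly and not each $\nabla_h^+ u_i$ separately.
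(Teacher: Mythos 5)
Your proof is correct and follows essentially the same route as the paper: differentiate $\entropyH^h$, integrate by parts discretely, symmetrise the cross terms via detailed balance, and extract \eqref{estim.1} from the diagonal $i=j$ terms using $A_{ii}>0$. The only cosmetic difference is the elementary inequality invoked (the paper uses $(x-y)(\log x-\log y)\ge 4(\sqrt{x}-\sqrt{y})^2$, you use the logarithmic-mean/arithmetic-mean bound), which yields the same diagonal estimate with the same constant.
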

\begin{remark}\label{dt.H}
 \textnormal{The normalised entropy described at the end of Section 2
 differs from $\entropyH^h$ in \eqref{entropy.h} only by terms which are constant with respect to time $t$ (due to mass conservation), thus the entropy dissipation of those terms is $0$.}
\end{remark}

For the proof, we rely on the following elementary properties for the
discrete derivatives:
\renewcommand{\theenumi}{\roman{enumi}}
\begin{enumerate}
\item Discrete integration by parts: For all $1$-periodic
  functions $p,q: \R \to \R$,
 \begin{equation}\label{rete.rules1}
   \sum_{k=0}^{M-1} (\nabla^+_h p) (x_k)\, q(x_k) =
   - \sum_{k=0}^{M-1}  p(x_k)\, (\nabla^-_h q)(x_k) ;
 \end{equation}
\item Discrete product rule: For all functions $p,q: \R \to \R$,
  \begin{equation} \label{rete.rules2}
    (\nabla^+_h(pq))(x) = p(x+h)\, (\nabla^+_h q)(x) +
    (\nabla^+_hp)(x)\, q(x).
  \end{equation}
\end{enumerate}

\begin{proof}
  By using the abbreviation $\tilde{a}_{ij}:=\pi_i A_{ij}$ (and
  therefore assuming that $\tilde{a}_{ij}=\tilde{a}_{ji}$ for all
  $i,j \in \{1,\ldots,n\}$), we can compute
  \begin{align*}
    &\frac{\dd}{\dd t}\entropyH^h(u) = \sum_{i=1}^n \sum_{k=0}^{M-1} h\, \pi_i\, \pa_t u_i(x_k)\, \log u_i(x_k) \\
    &=\sum_{i=1}^n \sum_{k=0}^{M-1} h\pi_i \left(\Delta_h\left[D_i u_i + u_i\sum_{j=1}^n A_{i,j}u_{j}\right]\right)(x_k)\log u_i(x_k) \\
    &= h \sum_{i=1}^n \sum_{k=0}^{M-1}  D_i\pi_i\, \log u_i(x_k)\, (\Delta_h u_{i})(x_k)  + h\sum_{i,j=1}^n \sum_{k=0}^{M-1}  \tilde{a}_{ij}\,\log u_i(x_k)\, (\Delta_h (u_{i}\,u_j))(x_k)\\
    &=  -h\sum_{i=1}^n \sum_{k=0}^{M-1} D_i\pi_i\,( \nabla^+_h (\log u_i))(x_k) \, (\nabla^+_h(u_i))(x_k) -h\sum_{i,j=1}^n \sum_{k=0}^{M-1} \tilde{a}_{ij}( \nabla^+_h (\log u_i))(x_k) \, (\nabla^+_h(u_i\,u_j))(x_k)  \\
    & =  -h\sum_{i=1}^n \sum_{k=0}^{M-1} D_i\pi_i\,( \nabla^+_h (\log u_i))(x_k) \, (\nabla^+_h(u_i))(x_k) - \frac{h}2\, \sum_{i,j=1}^n \sum_{k=0}^{M-1} \tilde{a}_{ij}( \nabla^+_h (\log (u_i\,u_j)))(x_k) \, (\nabla^+_h(u_i\,u_j))(x_k)\\
    &   \le -4h\sum_{i=1}^n \sum_{k=0}^{M-1} D_i\pi_i\, |\nabla^+_h(\sqrt{u_i})(x_k)|^2   - 2h\, \sum_{i,j=1}^n \sum_{k=0}^{M-1} \tilde{a}_{ij} | \nabla^+_h (\sqrt{u_i\,u_j}))(x_k)|^2  \le 0 .
  \end{align*}
  We used above the elementary inequality
  $(x-y)(\log x- \log y)\geq 4 (\sqrt{x}-\sqrt{y})^2$ for all
  $x>0,y>0$.

  We end up the proof of estimate \eqref{estim.2} by noticing that all
  terms above are nonpositive and by integrating between $0$ and
  $T$. Estimate \eqref{estim.1} is obtained by using only the self
  diffusion terms (that is, the ones corresponding to
  $\tilde{a}_{ij}$, for $i=j$) and also by integrating between $0$ and
  $T$.
\end{proof}

Next, we introduce for $1\leq p<\infty$ the discrete norm for
$(w(x_k))_{k=0,\ldots,M-1}$ by defining
\begin{align}\label{disc.norm}
  \|w\|^p_{h,p} := h \sum_{k=0}^{M-1} |w(x_k)|^p.
\end{align}

With the help of the following lemma, we can switch between the
discrete norm and the norm of the continuous linear interpolant
$\tilde{w}$ of $w$ defined in \eqref{tilde.u}:

\begin{lemma}\label{lemma6}
 For $1\leq p < \infty$ and $w(x_k)\geq 0$, $k=0,..,M-1$, it holds that
 \begin{align}\label{norms}
   \|\tilde{w}\|^p_{L^p(\T)} \leq \|w\|^p_{h,p}\leq \frac{p+1}{2}\|\tilde{w}\|^p_{L^p(\T)},
 \end{align}
    \begin{align}\label{norms2}
	\|\nabla \tilde{w} \|^p_{L^p(\T)} = \|\nabla^+_h w\|_{h,p}^p,
 \end{align}
 where $\|w\|_{h,p}$ is the discrete norm defined in
 \eqref{disc.norm}, and $\tilde{w}$ is the linear interpolant defined
 in \eqref{tilde.u}.
\end{lemma}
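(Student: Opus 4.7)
The plan is to reduce both inequalities to pointwise statements on a single mesh interval $[x_k, x_{k+1}]$ and then sum, using the periodic identification $x_M = x_0$. On such an interval, the linear interpolant defined by \eqref{tilde.u} can be written as $\tilde w(x) = (1-t)\,a_k + t\,b_k$ with $t = (x - x_k)/h \in [0,1]$, $a_k := w(x_k)$, $b_k := w(x_{k+1})$, and in particular $\tilde w$ is affine on this interval with derivative equal to the constant $(b_k - a_k)/h = (\nabla_h^+ w)(x_k)$.

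The identity \eqref{norms2} is then immediate: $\int_{x_k}^{x_{k+1}} |\nabla \tilde w|^p \,dx = h\,|(\nabla_h^+ w)(x_k)|^p$, and summing over $k = 0, \dots, M-1$ gives exactly $\|\nabla_h^+ w\|_{h,p}^p$.

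For \eqref{norms}, the substitution $t = (x-x_k)/h$ yields $\int_{x_k}^{x_{k+1}} \tilde w^p \,dx = h \int_0^1 ((1-t)a_k + t b_k)^p \,dt$. The lower bound follows from convexity of $s \mapsto s^p$ (for $p \geq 1$), which gives $((1-t)a_k + t b_k)^p \leq (1-t)\,a_k^p + t\,b_k^p$; integrating and summing over $k$ produces $h \sum_k (a_k^p + b_k^p)/2$, which by periodicity equals $\|w\|_{h,p}^p$. For the upper bound I would use the explicit primitive $\int_0^1((1-t)a + tb)^p\,dt = (b^{p+1} - a^{p+1})/((p+1)(b-a))$ (valid for $a \neq b$; the case $a = b$ is trivial), and reduce the desired inequality to the elementary estimate $(b^{p+1} - a^{p+1})/(b - a) \geq a^p + b^p$ for $a, b \geq 0$ and $p \geq 1$. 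Expanding, this is equivalent to $a b^p \geq a^p b$, i.e.\ $a b (b^{p-1} - a^{p-1}) \geq 0$ (assuming WLOG $b \geq a$), which is obvious. Summing the per-interval bound $\int_0^1((1-t)a_k + t b_k)^p\,dt \geq (a_k^p + b_k^p)/(p+1)$ and invoking periodicity gives $\|\tilde w\|_{L^p(\T)}^p \geq \tfrac{2}{p+1}\|w\|_{h,p}^p$, i.e.\ the claimed upper bound.

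There is no real obstacle: the statement is a purely geometric fact about the $P_1$ interpolant, and the only analytic ingredient is the single-variable inequality $(b^{p+1} - a^{p+1})/(b-a) \geq a^p + b^p$, which is elementary. Nonnegativity of $w$ plays no role in the gradient identity and is used in the $L^p$ part only via the positivity of $a_k$ and $b_k$, which makes the explicit primitive formula unambiguous.
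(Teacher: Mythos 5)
Your proposal is correct and follows essentially the same route as the paper: convexity of $s\mapsto s^p$ for the first inequality, the exact primitive $\int_0^1((1-t)a+tb)^p\,\dd t=(b^{p+1}-a^{p+1})/((p+1)(b-a))$ combined with the elementary bound $(b^{p+1}-a^{p+1})/(b-a)\ge a^p+b^p$ for the second, and the piecewise-constant derivative for \eqref{norms2}. The only (welcome) difference is that you spell out the proof of the elementary inequality, which the paper leaves as an exercise.
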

\begin{remark}
  The factor $2/(p+1)$ is necessary, as  can be seen from the case when
  $w(x_k)=1$ for $k=1$ and $w(x_k)=0$ otherwise.
\end{remark}

\begin{proof}
  Note that the linear interpolant $\tilde{w}$ can also be rewritten as
  \begin{align} \label{alpha}
    \tilde{w}(x)=\sum_{k=0}^{M-1}\bigg(\alpha_k(x)w(x_k) + (1-\alpha_k(x))w(x_{k+1})\bigg)\indicator_{[x_k,x_{k+1})}(x),
  \end{align}
  where $w(x_M) := w(x_0)$ and
  \begin{align*}
    \alpha_k(x)=\frac{x_{k+1}-x}{h}.
  \end{align*}

  For $x \in [x_k,x_{k+1})$ with $k=0,\ldots,M-1$, we know thanks to
  \eqref{alpha} that
  \begin{align*}
    \tilde{w}(x)=\alpha_k(x)w(x_k)+(1-\alpha_k(x))w(x_{k+1}).
  \end{align*}
  Since $x \mapsto x^p$ is convex, we see that
  \begin{align*}
    \left|\tilde{w}(x)\right|^p \leq \alpha_k(x)\, |w(x_k)|^p  + (1-\alpha_k(x))\,|w(x_{k+1})|^p,
  \end{align*}
  so that integrating between $x_k$ and $x_{k+1}$, we get
  \begin{align*}
    \int_{x_{k}}^{x_{k+1}}|\tilde{w}(x)|^p\,\dd x \leq \frac{h}{2}|w(x_k)|^p + \frac{h}{2}|w(x_{k+1})|^p,
  \end{align*}
  which shows the first part of \eqref{norms}.

  In the other direction, we find that
  \begin{align*}
    \int_{x_{k}}^{x_{k+1}}|\tilde{w}(x)|^p\,\dd x &= h \int_{\beta=0}^1 |\beta w(x_k) + (1-\beta)w(x_{k+1})|^p\,\dd\beta \\
                                               &= \frac{h}{p+1}\frac{\left[w(x_{k+1})\right]^{p+1}-\left[w(x_k)\right]^{p+1}}{w(x_{k+1}) - w(x_k)} \\
                                               &\geq \frac{h}{p+1}\bigg[\left[w(x_{k+1})\right]^{p} + \left[w(x_{k})\right]^{p} \bigg],
  \end{align*}
  where we used the elementary inequality
  \begin{align}\label{elementary.inequ}
    \frac{A^{p+1}-B^{p+1}}{A-B} \geq A^p + B^p \quad \mbox{for all}~~A,B \geq 0.
  \end{align}
  This elementary inequality is easily proved (by considering
  $A/B$). This finishes the proof of \eqref{norms}.

  For \eqref{norms2}, we see that for $k=0,\ldots,M-1$,
  \begin{align*}
    \nabla \tilde{w}(x) = \left(\nabla^+_h w\right)(x_k)\quad
    \text{for $x \in (x_k,x_{k+1}).$}
  \end{align*}
  This implies
  \begin{align*}
    \|\nabla \tilde{w}\|^p_{L^p(\T)}
    = \int_{\mathbb{T}}\left|\nabla \tilde{w}(x)\right|^p\,\dd x
    =\sum_{k=0}^{M-1} \int_{x_k}^{x_{k+1}} \left|\nabla
    \tilde{w}(y)\right|^p\,\dd y
    = \sum_{k=0}^{M-1} h \left|\left(\nabla^+_h w\right)(x_k)\right|^p = \|\nabla^+_h w\|_{h,p}^p.
  \end{align*}
\end{proof}

\subsection{Uniform a priori estimates for the linear interpolant}

From now on, when we interpolate functions which depend on $t$, we
systematically write $\tilde{w}(t,x)$ instead of
${\widetilde{w(t,\cdot)}}(x)$. We also use the notation $C_T$ for any
constant depending on the time $T$, on the inital data and the
parameters $\pi$, $A_{i,j}$ and $D_i$ of the problem, but not on the
discretisation parameter $h= 1/M$.

Combining \cref{thm:discrete-a-priori} and \cref{lemma6}, we obtain
for $\tilde{u}_i$ with $i=1,\dots,n$ that
\begin{align}
 \sup_{t \in [0,T]} \int_{\mathbb{T}}\tilde{u}_i(t,x)\,\dd x \leq C_T, \label{apriori.utilde.1}\\
 \int_0^T \int_{\mathbb{T}} \left|\nabla
  \tilde{u}_i(t,x)\right|^2\,\dd x\, \dd t \leq C_T. \label{apriori.utilde.2}
\end{align}
Using the Gagliardo-Nirenberg inequality, this implies for
$p \in [1,4]$ that
\begin{align} \label{nn}
  \int_0^T \int_{\mathbb{T}}|\tilde{u}_i(t,x)|^p \,\dd x\, \dd t \leq C_T.
\end{align}
Note that in the estimate above, when the dimension $d=1$ is replaced by a more
general dimension $d$, the maximal value $4$ of $p$ is replaced by
$2+2/d$.

Indeed, the Gagliardo-Nirenberg interpolation allows to estimate
$\|\tilde{u}_i\|_{L^p(\T)}$ by
$\|\nabla \tilde{u}_i\|_{L^2(\T)}^{\theta}
\|\tilde{u}_i\|_{L^1(\T)}^{1-\theta}$. Choosing $\theta p=2$, means
that $\theta=(2d(p-1))/((d+2)p) \in [0,1]$ so that $p=2+ 2/d$. With
this choice, we find
\begin{align*}
  \|\tilde{u}_i\|^p_{L^p([0,T], L^p(\T))}
  &= \int_0^T \|\tilde{u}_i\|^p_{L^p(\T)}\,\dd t \\
  &\leq C \int_0^T \|\nabla\tilde{u}_i\|^{\theta p}_{L^2(\T)}
    \|\tilde{u}_i\|^{(1-\theta)p}_{L^1(\T)}\,\dd t \\
  &\leq C \|\tilde{u}_i\|^{(1-\theta)p}_{L^{\infty}([0,T],
    L^1(\T))}
    \int_0^T \|\nabla\tilde{u}_i\|^{\theta p}_{L^2(\T)}\,\dd t \\
  &\leq C \|\tilde{u}_i\|_{L^{\infty}([0,T], L^1(\T))}^{(1-\theta)p}\|\nabla\tilde{u}_i\|^{2}_{L^2([0,T] \times\T)}.
\end{align*}

Using \cref{lemma6}, we can relate the estimate back to the discrete
system as
\begin{equation}
  \label{eq:discrete-l-p}
  \int_0^T h \sum_{k=0}^{M-1} \left|u_i(t,x_k)\right|^p\,\dd t \leq C_T,
  \quad \text{for all $p\in [1,4]$.}
\end{equation}

We now show that for all $\phi \in W^{1,\infty}(\T)$,
\begin{align} \label{estim.11}
  \int_0^T \bigg| \pa_t \int_{\T} \tilde{u_i}(t,x)\, \phi(x)\, \dd x
  \bigg| \, \dd t \leq C_T\, \|\phi\|_{W^{1,\infty}(\T)}.
\end{align}

Indeed, performing a discrete integration by parts in $x_k$
(cf. \eqref{rete.rules1}) and performing the translation
$x \mapsto x+h$ inside the integral over $\T$, we get that
\begin{align*}
  &\pa_t \int_{\T} \tilde{u_i}(t,x)\, \phi(x)\, \dd x\\
  &= \sum_{k=0}^{M-1}  \bigg[\Delta_h \Big(D_i \,u_i(t,\cdot) +
    u_i(t,\cdot)\sum_{j=1}^n A_{ij} u_j(t,\cdot)\Big)\bigg] (x_k) \,
    \int_{\T}  T(x - x_k) \, \phi(x)\, \dd x \\
  &=  - \sum_{k=0}^{M-1}  \bigg[\nabla^+_h \Big(D_i \,u_i(t,\cdot) +
    \sum_{j=1}^n A_{ij} \, u_i(t,\cdot)\, u_j(t,\cdot)\Big)\bigg]
    (x_k) \, \int_{\T}  [(\nabla^+_h T)(x - \cdot )](x_k) \, \phi(x)\,
    \dd x \\
  &= \sum_{k=0}^{M-1}  \bigg[\nabla^+_h \Big(D_i \,u_i(t,\cdot) +  \sum_{j=1}^n A_{ij} \, u_i(t,\cdot)\, u_j(t,\cdot)\Big)\bigg] (x_k) \, \int_{\T}  \frac{T(x-x_k) - T(x-x_{k+1})}{h} \, \phi(x)\, \dd x \\
  &= - \sum_{k=0}^{M-1} \bigg[\nabla^+_h \Big(D_i \,u_i(t,\cdot) +  \sum_{j=1}^n A_{ij} \, u_i(t,\cdot)\, u_j(t,\cdot)\Big)\bigg] (x_k) \, \int_{\T}  T(x-x_k) \, [\nabla^+_h\phi](x)\, \dd x.
\end{align*}
By the discrete product rule \eqref{rete.rules2}, this can be
estimated as
\begin{align*}
  &\int_0^T \bigg| \pa_t \int_{\T} \tilde{u_i}(t,x)\, \phi(x)\, \dd x \bigg| \, \dd t \\
  & \le   \int_0^T  \sum_{k=0}^{M-1} \bigg| \bigg[\nabla^+_h \Big(D_i \,u_i(t,\cdot) +  \sum_{j=1}^n A_{ij} \,u_i(t,\cdot)\, u_j(t,\cdot)\Big)\bigg] (x_k) \bigg| \,
    \int_{\T}  T(x-x_k) \, |[\nabla^+_h\phi](x)|\, \dd x \, \dd t \\
  & \le \| \nabla^+_h\phi \|_{\infty} \,h\,
    \int_0^T  \sum_{k=0}^{M-1} \bigg| \bigg[\nabla^+_h \Big(D_i \,u_i(t,\cdot) +  \sum_{j=1}^n A_{ij} \, u_i(t,\cdot)\, u_j(t,\cdot)\Big) \bigg] (x_k) \bigg| \, \dd t \\
  & \le \| \nabla^+_h\phi \|_{\infty} \,
    \int_0^T   h\, \sum_{k=0}^{M-1}\sum_{j=1}^n A_{ij}  |u_i(t,x_k+h)|\,
    | [\nabla^+_h u_j(t,\cdot)](x_k) |\, \dd t \\
  & \quad + \| \nabla^+_h\phi \|_{\infty} \,
    \int_0^T   h\, \sum_{k=0}^{M-1}\sum_{j=1}^n A_{ij}  |u_j(t,x_k)|\,
    | [\nabla^+_h u_i(t,\cdot)](x_k) |\, \dd t \\
  & \quad + \| \nabla^+_h\phi \|_{\infty} \,
    \int_0^T   h\, \sum_{k=0}^{M-1} D_i \,
    | [\nabla^+_h u_i(t,\cdot)](x_k) |\, \dd t \\
  & \le \| \nabla^+_h\phi \|_{\infty} \sum_{j=1}^n A_{ij}
    \bigg( \int_0^T   h\, \sum_{k=0}^{M-1}  |u_i(t,x_k+h)|^2\, \dd t \bigg)^{1/2}\,\,
    \bigg( \int_0^T   h\, \sum_{k=0}^{M-1}| [\nabla^+_h u_j(t,\cdot)](x_k) |^2\, \dd t \bigg)^{1/2} \\
  & \quad + \| \nabla^+_h\phi \|_{\infty} \sum_{j=1}^n A_{ij}
    \bigg( \int_0^T   h\, \sum_{k=0}^{M-1}  |u_j(t,x_k)|^2\, \dd t \bigg)^{1/2}\,\,
    \bigg( \int_0^T   h\, \sum_{k=0}^{M-1}| [\nabla^+_h u_i(t,\cdot)](x_k) |^2\, \dd t \bigg)^{1/2} \\
  & \quad + \| \nabla^+_h\phi \|_{\infty} D_i\,
    \bigg( \int_0^T   h\, ( \sum_{k=0}^{M-1} 1 )\,\dd t \bigg)^{1/2}\,\,
    \bigg( \int_0^T   h\, \sum_{k=0}^{M-1}| [\nabla^+_h u_i(t,\cdot)](x_k) |^2\, \dd t \bigg)^{1/2} \\
  &\le C_T \, [n \max_{i,j} A_{ij} + \max_i D_i]\, \|\phi\|_{W^{1,\infty}(\T)}\, \bigg[\,  T^{1/2} + \max_j\,  \bigg( \int_0^T   h\, \sum_{k=0}^{M-1}  |u_j(t,x_k)|^2\, \dd t \bigg)^{1/2} \, \bigg] \\&\le C_T,
\end{align*}
where we used estimates \eqref{estim.1} and \eqref{eq:discrete-l-p}.

\subsection{Compactness}

In order to stress the dependence w.r.t. the spatial discretisation, we denote by
$[\tilde{u}_i]^M$ the interpolant associated to the discrete system on
$\Omega_M$.

The classical Aubin-Lions lemma shows with the estimates
\eqref{apriori.utilde.2}, \eqref{nn} and \eqref{estim.11} that there
exists a subsequence such that
\begin{align*}
  [\tilde{u}_i]^M \to_{M \to \infty} u_i \quad \text{strongly in $L^{4-\var}([0,T] \times\T)$}
\end{align*}
for some $u_i \in L^4([0,T]\times \T) \cap L^2([0,T], H^1(\T))$.

\subsection{Passing to the limit}

We now show that the limit is a solution in the weak formulation
stated in \cref{main}.

We first find that the interpolation $[\tilde{u}_i]^M$ of the discrete
solution on $\Omega_M$ satisfies for all test functions $\varphi :=
\varphi(t,x) \in C_c([0,+\infty)\times \T)$ and $i=1,\dots,N$ that
\begin{equation}\label{13}
  \begin{aligned}
    &-  \int_{\T}[\tilde{u}_i]^M(0,x) \, \varphi(0,x)\,\dd x -
    \int_0^{\infty}\int_{\T}[\tilde{u}_i]^M(t,x)\, \pa_t \varphi(t,x)\,\dd x\, \dd t \\
    & = D_i \int_0^{\infty}\int_{\T} [\tilde{u_i}]^M(t,x)\,
    \Delta_h \varphi(t,x)\,\dd x\, \dd t + \sum_{j=1}^n A_{ij}
    \int_0^{\infty}\int_{\T}[\widetilde{u_iu_j}]^M(t,x)
    \, \Delta_h \varphi(t,x)\,\dd x\, \dd t,
  \end{aligned}
\end{equation}
where $[\widetilde{{u}_iu_j}]^M$ is the interpolant (in the sense of
\eqref{tilde.u}) of ${u}_iu_j$ from the values on $\Omega_M$.

Indeed, differentiating the interpolant $[\tilde{u}]^M$ in time, we
find from the ODE system \eqref{ode} that
\begin{align}\label{equ.lin.interp.}
  \pa_t [\tilde{u}_i]^M(t,x)
  = \sum_{k=0}^{M-1}
  \left[\Delta_h\left( D_i\,u_i(t,\cdot) + \sum_{j=1}^n A_{ij} u_i(t,\cdot)\, u_j(t,\cdot)\right)\right](x_k)\, T(x-x_k).
\end{align}
Multiplying with the compact test function $\varphi$ and integrating shows that
\begin{align*}
  &-  \int_{\T}[\tilde{u}_i]^M(0,x) \, \varphi(0,x)\,dx-\int_0^{\infty} \int_{\T}[\tilde{u}_i]^M\pa_t \varphi \,\dd x\,\dd t \\
  &= \int_0^{\infty} \int_{\T}\sum_{k=0}^{M-1}  \left[\Delta_h\left(D_i\,u_i(t,\cdot) + \sum_{j=1}^n A_{ij} u_i(t,\cdot)\, u_j(t,\cdot)\right)\right](x_k)T(x-x_k)\varphi(t,x)\,\dd x\,\dd t\\
  &=\int_0^{\infty} \int_{\T}\sum_{k=0}^{M-1} \bigg[D_i\,u_i(t,\cdot) + \sum_{j=1}^n A_{ij} u_i(t,\cdot)\,u_j(t,\cdot)\bigg](x_k)\Delta_h\left[T(x- \cdot)\right](x_k)\,\varphi(t,x)\,\dd x\,\dd t\\
  &=\int_0^{\infty} \sum_{k=0}^{M-1} \bigg[D_i\,u_i(t,\cdot) + \sum_{j=1}^n A_{ij} u_i(t,\cdot)\,u_j(t,\cdot)\bigg]
    (x_k)\int_{\T}T(x-x_k)(\Delta_h\varphi)(x)\,\dd x\,\dd t\\
  &= D_i \int_0^{\infty}\int_{\T} [\tilde{u_i}]^M(t,x)\Delta_h \varphi(t,x)\,\dd x\,\dd t + \sum_{j=1}^n A_{ij} \int_0^T\int_{\T}[\widetilde{u_iu_j}]^M(t,x)\Delta_h \varphi(t,x)\,\dd x\,\dd t,
\end{align*}
so that \eqref{13} holds.
\medskip

Next, we use the following result explaining how the linear interpolation behaves on products:

\begin{lemma}\label{lemma.tilde}
  Under the assumptions of \cref{main}, the following estimate holds:
  \begin{align*}
    \int_0^T \int_{\T}\left|[\widetilde{u_iu_j}]^M(t,x)-[\tilde{u}_i]^M(t,x)[\tilde{u}_j]^M(t,x)\right|\,\dd x\,\dd t \leq C_T \, h.
  \end{align*}
\end{lemma}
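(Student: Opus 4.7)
The plan is to work pointwise on each cell $[x_k, x_{k+1}]$ and to exploit the bilinear nature of the defect. With $\alpha = \alpha_k(x) := (x_{k+1}-x)/h \in [0,1]$, the expression \eqref{tilde.u}--\eqref{defi.T} reads on this cell as $[\tilde u_i]^M(t,x) = \alpha\,u_i(t,x_k) + (1-\alpha)\,u_i(t,x_{k+1})$, and similarly for $[\tilde u_j]^M$, while
\[
[\widetilde{u_iu_j}]^M(t,x) = \alpha\,u_i(t,x_k)\,u_j(t,x_k) + (1-\alpha)\,u_i(t,x_{k+1})\,u_j(t,x_{k+1}).
\]
A direct expansion, together with the identity $\alpha-\alpha^2 = (1-\alpha)-(1-\alpha)^2 = \alpha(1-\alpha)$, collapses the difference into a single bilinear product, which after factoring out the discrete gradients reads
\[
[\widetilde{u_iu_j}]^M(t,x) - [\tilde u_i]^M(t,x)\,[\tilde u_j]^M(t,x) = \alpha(1-\alpha)\,h^2\,(\nabla^+_h u_i)(t,x_k)\,(\nabla^+_h u_j)(t,x_k).
\]

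I would then integrate this in absolute value on $[x_k,x_{k+1}]$, using $\int_{x_k}^{x_{k+1}}\alpha(1-\alpha)\,\dd x = h/6$, and sum over $k=0,\dots,M-1$ to obtain
\[
\int_{\T}\bigl|[\widetilde{u_iu_j}]^M - [\tilde u_i]^M[\tilde u_j]^M\bigr|(t,x)\,\dd x \leq \frac{h^2}{6}\,\Bigl(h\sum_{k=0}^{M-1}|(\nabla^+_h u_i)(t,x_k)|\,|(\nabla^+_h u_j)(t,x_k)|\Bigr).
\]
A discrete Cauchy--Schwarz in space bounds the right-hand side by $(h^2/6)\,\|\nabla^+_h u_i(t,\cdot)\|_{h,2}\,\|\nabla^+_h u_j(t,\cdot)\|_{h,2}$; integrating in $t\in[0,T]$ and applying Cauchy--Schwarz once more in time reduces everything to the quantities $\int_0^T h\sum_k |\nabla^+_h u_\ell(t,x_k)|^2\,\dd t$ for $\ell=i,j$, which are uniformly bounded by \eqref{estim.1} of \cref{thm:discrete-a-priori}. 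This yields a final bound of order $C_T\,h^2$, which is in particular smaller than the claimed $C_T\,h$.

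I do not expect any genuine obstacle here: the argument is purely algebraic combined with one use of the discrete $L^2([0,T];H^1)$ estimate already at hand. The only point deserving care is the bilinear identity on a single cell, where the cancellation between the two interpolants at the endpoints $x_k,x_{k+1}$ produces both the factor $\alpha(1-\alpha)$ and the second-difference structure $(\nabla^+_h u_i)(\nabla^+_h u_j)$ that ultimately accounts for the extra power of $h$ and hence for the smallness as $h\to 0$.
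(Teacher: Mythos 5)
Your proof is correct and follows essentially the same route as the paper: the cell-wise representation via $\alpha_k(x)$, the bilinear expansion producing the factor $\alpha_k(1-\alpha_k)$ with $\int_{x_k}^{x_{k+1}}\alpha_k(1-\alpha_k)\,\dd x=h/6$, and a Cauchy--Schwarz argument closed by the gradient estimate \eqref{estim.1}. The only difference is that you keep the exact factorisation of the defect as $\alpha_k(1-\alpha_k)\,h^2\,(\nabla^+_h u_i)(x_k)(\nabla^+_h u_j)(x_k)$ (which is indeed an identity), yielding the sharper rate $C_T\,h^2$, whereas the paper bounds one increment by $u_i(x_k)+u_i(x_{k+1})$ and additionally invokes \eqref{eq:discrete-l-p}, obtaining only $C_T\,h$; both are more than enough for the lemma.
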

\begin{proof}
  For $x \in [x_k,x_{k+1})$ the representation formula \eqref{alpha}
  shows for $i=1,\dots,n$ that
  \begin{align*}
    [\tilde{u}]^M_i(t,x)& = \alpha_k(x) u_i(t,x_k) + (1-\alpha_k(x))u_i(t,x_{k+1}),\\
    [\widetilde{u_iu_j}]^M(t,x)& = \alpha_k(x) \left[\left(u_iu_j\right)(t,x_k)\right] + (1-\alpha_k(x))\left[\left(u_iu_j\right)(t,x_{k+1})\right],
  \end{align*}
  where we recall that $\alpha_k(x)=(x_{k+1}-x)/h.$ Then, for
  $x \in [x_k,x_{k+1})$:
  \begin{align*}
    &\bigg|[\widetilde{u_iu_j}]^M(t,x) -
      [\tilde{u}_i]^M(t,x)[\tilde{u}_j]^M(t,x)\bigg| \\
    &=\bigg|\alpha_k(x) \left[\left(u_iu_j\right)(t,x_k)\right] + (1-\alpha_k(x))\left[\left(u_iu_j\right)(t,x_{k+1})\right] \\
    &\quad -\bigg( \left[\alpha_k(x)u_i(t,x_k) +  (1-\alpha_k(x))u_i(t,x_{k+1}) \right]\,\left[\alpha_k(x) u_j(t,x_k) + (1-\alpha_k(x))u_j(t,x_{k+1})\right]\bigg)\bigg|\\
    &\le \alpha_k(x)(1-\alpha_k(x))\bigg(u_i(t,x_k) + u_i(t,x_{k+1}) \bigg) \bigg|u_j(t,x_{k+1}) - u_j(t,x_k)\bigg|.
  \end{align*}
  Consequently, we get that
  \begin{align*}
    &\int_0^T \int_{\T}\left|[\widetilde{u_iu_j}]^M-[\tilde{u}_i]^M[\tilde{u}_j]^M \right|\,\dd x \dd t \\
    &\leq\int_0^T \left(\sum_{k=0}^{M-1}
      \int_{x_k}^{x_{k+1}}\alpha_k(x)(1-\alpha_k(x))\,\dd x\,
      \big(|u_i(t,x_k)| + |u_i(t,x_{k+1})|\big) \big|u_j(t,x_{k+1}) - u_j(t,x_k)\big| \right)\,\dd t \\
    &=\frac16\int_0^T\sum_{k=0}^{M-1} h \,\big(|u_i(t,x_k)| + |u_i(t,x_{k+1})|\big) \, \big|u_j(t,x_{k+1}) - u_j(t,x_k)\big|\,\dd t \\
    &\leq \frac13  \int_0^T \left(\sum_{k=0}^{M-1} h \left|u_i(t,x_k)\right|^2\right)^{1/2}\left(\sum_{k=0}^{M-1} h \left|u_j(t,x_{k+1})-u_j(t,x_k)\right|^2\right)^{1/2}\,\dd t\\
    &=\frac13\, h \left(\int_0^T \sum_{k=0}^{M-1} h \left|u_i(t,x_k)\right|^2\,\dd t\right)^{1/2}\left(\int_0^T\sum_{k=0}^{M-1} h \left|(\nabla^+_h u_j(t,\cdot))(x_k)\right|^2\,\dd t \right)^{1/2}.
  \end{align*}
  This conclude the proof with the estimates \eqref{estim.1} and
  \eqref{eq:discrete-l-p}.
\end{proof}

Thanks to the lemma above, the weak formulation \eqref{13} of the
discretised system implies that for all
$\varphi :=\varphi(t,x) \in C_c([0,T)\times \T)$ and $i\in 1,\ldots,n$,
\begin{align} \label{nn2}
  - \int_{\T} [\tilde{u}_i]^M(0,\cdot)\,\varphi(0,\cdot)\, \dd x
  - \int_0^{\infty} \int_{\T} \left([\tilde{u}_i]^M\,\pa_t \varphi + \bigg[ D_i \,\color{blue}{[\tilde{u}_i]^M} + \sum_{j=1}^n A_{ij} [\tilde{u}_i]^M [\tilde{u}_j]^M \bigg]\, \Delta_h \varphi\right)\,\dd x\, \dd t = \mathcal{O}(h).
\end{align}

We end up the proof of \cref{main} by observing that the limit
satisfies
\begin{align*}
  - \int_{\T} u_i(0,\cdot)\,\varphi(0,\cdot)\, \dd x
  - \int_0^{\infty}\int_{\T} \left(u_i\pa_t \varphi + [D_i\, u_i + \sum_{j=1}^n A_{ij}u_iu_j]\, \Delta \varphi\right)\,\dd x\, \dd t = 0.
\end{align*}
Indeed,
\begin{align*}
  &\left| - 	\int_{\T} u_i(0,\cdot)\,\varphi(0,\cdot) - \int_0^{\infty}\int_{\T} \left(u_i\pa_t \varphi + [D_i \, u_i + \sum_{j=1}^n A_{ij}u_iu_j]\, \Delta \varphi\right)\,\dd x\, \dd t\right|\\
  &\leq  \int_{\T}\left|u_i(0,\cdot)-[\tilde{u}_i]^M(0,
    \cdot)\right|\, |\varphi(0, \cdot)|\,dx
    + \int_0^{\infty}\int_{\T}\left|u_i-[\tilde{u}_i]^M\right||\pa_t\varphi|\,\dd x\, \dd t \\
  &+\sum_{j=1}^n A_{ij}\int_0^{\infty}\int_{\T} \left|[\tilde{u}_i]^M [\tilde{u}_j]^M-u_iu_j\right| \, |\Delta_h \varphi| \,\dd x\, \dd t
   +\color{blue}{D_i\,\int_0^{\infty}\int_{\T} \left|[\tilde u_i]^M-u_i\right||\Delta_h\varphi| \,\dd x\, \dd t} \\
    &+\int_0^{\infty}\int_{\T}| D_i\,u_i + \sum_{j=1}^n A_{ij}\,u_iu_j|\left|\Delta_h\varphi - \Delta\varphi\right|\,\dd x\, \dd t +  \color{blue}{\mathcal{O}(h)}.
\end{align*}
The first integral tends to $0$ because $u_i(0,\cdot)$ is continuous
on $\T$. The second and fourth integrals converge to $0$ because
$[\tilde{u_i}]^M\to u_i$ strongly in $L^1$,
 the third integral converges
to $0$ because $[\tilde{u_i}]^M\to u_i$ strongly in $L^2$, and the last
integral converges to $0$ since $\varphi$ is smooth.
\medskip

This concludes the proof of \cref{main}.

\bibliographystyle{elsarticle-num}
\bibliography{lit}

\end{document}